\documentclass[a4paper]{amsart}
\usepackage[leqno]{amsmath}
\usepackage{amssymb}
\usepackage{amscd}
\usepackage{amsthm}
\usepackage{mathtools}
\usepackage{bbm}
\usepackage{color}
\usepackage{enumerate}
\usepackage{cite}
\usepackage[utf8]{inputenc}
\usepackage[all,cmtip]{xy}
\usepackage{etoolbox}
\usepackage{tikz}
\usetikzlibrary{arrows}
\usepackage{extarrows}

\numberwithin{equation}{section}

							%K"orper, Ringe, Ideale, Idele, etc.
\newcommand{\Z}{\ensuremath{\mathbb{Z}}}
\newcommand{\Q}{\ensuremath{\mathbb{Q}}}

\newcommand{\C}{\ensuremath{\mathbb{C}}}

\newcommand{\A}{\ensuremath{\mathbb{A}}}
\newcommand{\I}{\ensuremath{\mathbb{I}}}
\newcommand{\OO}{\ensuremath{\mathcal{O}}}

\DeclareMathOperator{\Hom}{Hom}

\DeclareMathOperator{\id}{id}

\DeclareMathOperator{\GL}{\mathbb{GL}}
\DeclareMathOperator{\PGL}{\mathbb{PGL}}

\DeclareMathOperator{\GG}{\mathbb{G}}
\DeclareMathOperator{\PGG}{\mathbb{PG}}

\DeclareMathOperator{\ord}{ord}

\DeclareMathOperator{\St}{St}

\DeclareMathOperator{\cyc}{cyc}
\DeclareMathOperator{\anti}{anti}
\DeclareMathOperator{\HH}{H}
\DeclareMathOperator{\MS}{\mathcal{M}}

\newcommand{\PP}{\ensuremath{\mathbb{P}^1}}					
\newcommand{\sinfty}{\ensuremath{^{S,\infty}}}
\newcommand{\G}{\ensuremath{\mathcal{G}}}
\newcommand{\Ah}{\ensuremath{\mathcal{A}}}

\DeclareMathOperator{\Div}{Div}

\newcommand{\p}{\ensuremath{\mathfrak{p}}}
\newcommand{\q}{\ensuremath{\mathfrak{q}}}

\newcommand{\LI}{\mathcal{L}}
\newcommand{\No}{\ensuremath{N}}
\newcommand{\Tr}{\ensuremath{Tr}}

\newcommand{\too}{\longrightarrow}								%Arrows
\newcommand{\mapstoo}{\longmapsto}

\newtheorem{Lem}{Lemma}[section]
\makeatletter
\newlength{\@thlabel@width}%
\newcommand{\thmenumhspace}{\settowidth{\@thlabel@width}{\itshape1.}\sbox{\@labels}{\unhbox\@labels\hspace{\dimexpr-\leftmargin+\labelsep+\@thlabel@width-\itemindent}}}
\makeatother
\newtheorem{Pro}[Lem]{Proposition}
\newtheorem{Thm}{Theorem}

\newtheorem{Def}[Lem]{Definition}
\newtheorem{Rem}[Lem]{Remark}
\newtheorem{Con}[Lem]{Conjecture}
\newtheorem{Cor}[Lem]{Corollary}

\newtheorem*{Hyp}{Hypothesis}

\author[L. Gehrmann]{Lennart Gehrmann}
\address{L. Gehrmann \\ Fakult\"at f\"ur Mathematik \\ Universit\"at Duisburg-Essen \\ Thea-Leymann-Stra\ss e 9 \\ 45127 Essen \\ Germany}
\email{lennart.gehrmann@uni-due.de}

\title[Functoriality of automorphic L-invariants]{Functoriality of automorphic L-invariants and applications}
\subjclass[2010]{Primary 11F41; Secondary 11F67, 11F75, 11F85, 11G05}

\setcounter{tocdepth}{1}

\begin{document}

\begin{abstract}
We study the behaviour of automorphic L-invariants associated to cuspidal representations of GL(2) of cohomological weight 0 under abelian base change and Jacquet-Langlands lifts to totally definite quaternion algebras.
Under a standard non-vanishing hypothesis on automorphic L-functions and some technical restrictions on the automorphic representation and the base field we get a simple proof of the equality of automorphic and arithmetic L-invariants.
This together with Spie\ss' results on p-adic L-functions yields a new proof of the exceptional zero conjecture for modular elliptic curves - at least, up to sign.
\end{abstract}

\maketitle

\tableofcontents
\section*{Introduction}
In his seminal article \cite{D} Darmon defined automorphic $\LI$-Invariants of modular forms of weight $2$.
His definition was inspired by Teitelbaum's work (cf.~\cite{Teitelbaum}) on $\LI$-invariants for automorphic forms on definite quaternion groups.
The construction was generalized to various settings, for example:
to modular forms of higher weight by Orton (cf.~\cite{Orton}), to Hilbert modular forms of parallel weight $2$ by Spie\ss~(cf.~\cite{Sp}) and to automorphic forms of parallel weight $2$ on quaternion groups over fields with narrow class number $1$ by Guitart, Masdeu and Şengün (cf.~\cite{GMS}).

An advantage of these automorphic $\LI$-invariants over their arithmetic counterparts is that they can directly be related to derivatives of $p$-adic $L$-functions.
However, this raises the question of whether one can show that automorphic and arithmetic $\LI$-invariants agree without using that both types of invariants show up in exceptional zero formulae. 
Furthermore, it is natural to ask whether automorphic $\LI$-invariants behave well under various automorphic operations like base change and Jacquet-Langlands lifts.
In the present note we want to give partial answers to these questions.

Let $\pi$ be a cuspidal automorphic representation of $\GL_2$ of parallel weight $2$ over a number field $F$.
Under a standard simultaneous non-vanishing hypothesis on $L$-functions and some technical assumptions we prove that $\LI$-invariants of $\pi$
\begin{itemize}
\item behave as expected under abelian base change (see Corollary \ref{bcc}),
\item are independent of the sign at infinity (see Theorem \ref{invariance}) and
\item agree up to sign with $\LI$-invariants of Jacquet-Langlands lifts of $\pi$ to totally definite quaternion groups (see Theorem \ref{haupt}).
\end{itemize}

Via the Cerednik-Drinfeld uniformization of Shimura curves one can relate $\LI$-invariants of automorphic representations on totally definite quaternion groups directly to arithmetic $\LI$-invariants (cf.~\cite{BG2}, Theorem 4.10).
Thus, combining our results with Spie\ss' work on $p$-adic $L$-functions gives a new proof of the exceptional zero formula for modular elliptic curves over totally real fields in certain situations (see Corollary \ref{ezc}).
Note that even in the case $F=\Q$ the proof is different from previous ones, as it relies neither on Hida families nor on Kato's Euler system. 

The article is structured as follows:
The first section is a collection of preliminaries on $p$-adic fields.
We introduce $\LI$-invariants of $p$-adic periods and recall Breuil's construction (cf.~\cite{Br}) of extensions of the Steinberg representation.

In the second section we define automorphic $\LI$-invariants and state known results.
Here, we follow Spie\ss' approach rather closely.

In Section \ref{Galois} we show that, if $E/F$ is an abelian extension, its Galois group acts on the set of automorphic $\LI$-invariants of the base change $\pi_E$ of $\pi$ to $E$ as expected.
This together with an Artin formalism for $p$-adic $L$-functions is used in Section \ref{Artin} to prove the invariance of $\LI$-invariants under base change.
If the base field is totally imaginary, the $\LI$-invariant does not depend on a choice of sign at infinity by definition.
For an arbitrary number field the independence of the sign at infinity follows by considering a base change to a suitable totally imaginary extension.

In case $F$ is totally real and $E/F$ is an imaginary quadratic extension, in which the primes under consideration are split, there are two different constructions of anticyclotomic $p$-adic $L$-functions.
One can either restrict the $p$-adic $L$-function of the base change $\pi_E$, which was constructed by Deppe in \cite{De}, to the anticyclotomic direction or one can use quaternionic Stickelberger elements associated to the Jacquet-Langlands lift $\pi_B$ of $\pi$ to a quaternion algebra $B$, in which $E$ can be embedded.
Comparing leading terms formula for quaternionic Stickelberger elements (see \cite{BG2}, Theorem 4.5) with Bergunde's generalization of Spie\ss' exceptional zero formula for the $p$-adic $L$-function of $\pi_E$ (see \cite{Felix}, Theorem 4.17) we relate $\LI$-invariants of the base change $\pi_E$ to $\LI$-invariants of the Jacquet-Langlands lift $\pi_B$ - at least up to sign (see Section \ref{Anti}).

Finally, in order to prove the above mentioned Theorem \ref{haupt} we use various non-vanishing results to twist into a situation, where the previous established results apply.
This in turn implies the equality of automorphic and arithmetic $\LI$-invariants for modular elliptic curves over totally real fields up to sign.
Moreover, we utilize the transcendence of the Tate periods of elliptic curves to remove the ambiguity of sign once one base changes to an imaginary quadratic extension as above (see Theorem \ref{CM}).
Note that the equality of automorphic $\LI$-invariants under Jacquet-Langlands lifts was proven before in several instances (see for example \cite{BDI} or \cite{DG}).
But our methods differ vastly from previous ones as we do not make any use of Hida families.

\bigskip
\textbf{Notations.} Throughout the article we fix a prime $p$.
All rings are assumed to be commutative and unital.
The group of invertible elements of a ring $R$ will be denoted by $R^{\ast}$.

If $R$ is a ring and $G$ is a group, we write $R[G]$ for the group algebra of $G$ with coefficients in $R$.
Given a group $G$ and a group homomorphism $\epsilon\colon G\to R^{\ast}$ we let $R(\epsilon)$ be the $R[G]$-module which underlying $R$-module is $R$ itself and on which $G$ acts via the character $\epsilon$.
If $M$ is another $R[G]$-module, we put $M(\epsilon)=M\otimes_{R}R(\epsilon)$.

If $X$ and $Y$ are topological spaces, we write $C(X,Y)$ for the space of continuous maps from $X$ to $Y$.
The cardinality of a finite set $X$ will be denoted by $|X|$.

If $E/F$ is a finite extension of fields, we let $\No_{E/F}\colon E^{\ast}\to F^{\ast}$ be the relative norm and $\Tr_{E/F}\colon E\to F$ the relative trace map.

\bigskip
\textbf{Acknowledgements.} It is my pleasure to thank Felix Bergunde for several helpful discussions.
I thank Santiago Molina for comments on an earlier draft of the article.

\section{Preliminaries}
Throughout this section we fix a finite extension $F$ of $\Q_p$ with valuation ring $\OO$.
We write $\ord\colon F^{\ast}\to \Z$ for the normalized valuation, i.e.~$\ord(\varpi)=1$ holds for any uniformizer $\varpi$ of $F$.
\subsection{$p$-adic periods and $\LI$-invariants} \label{LI}
We fix a ring of integers $R$ of an algebraic number field.
In the following, we write the group law of the $R$-module $F^{\ast}\otimes R$ multiplicatively.
\begin{Def}
\begin{enumerate}[(a)]\thmenumhspace
\item An element of $F^{\ast}\otimes R$ is called a (p-adic) period if it is not in the kernel of the map
$$\ord\otimes \id_R\colon F^{\ast}\otimes R\too R.$$
\item Let $q\in F^{\ast}\otimes R$ be a period.
Given a finite extension $\Omega$ of $\Q_p$ and a continuous homomorphism $\lambda\colon F^{\ast} \to \Omega$ we define the $\LI$-invariant of $q$ with respect to $\lambda$ by
\begin{align*}
\LI_\lambda(q):=(\ord\otimes \id_R)(q)^{-1} \cdot (\lambda\otimes\id_R)(q)\in \Omega\otimes R.
\end{align*}
\end{enumerate}
\end{Def}
By definition we have $\LI_\lambda(q^{n})=\LI_\lambda(q)$ for every $n\in R-\left\{0\right\}$.
If $x\in F^{\ast}- \OO^{\ast}$, we write $\log_x\colon F^{\ast}\to F$ for the branch of the $p$-adic logarithm such that $\log_x(x)=0$ and set $\LI_x(q)=\LI_{\log_x}(q)$.

The proof of the following lemma is a straightforward computation.
\begin{Lem}\label{changeofdir}
Let $q\in F^{\ast}\otimes R$ be a period.
\begin{enumerate}[(a)]
\item\label{first} For elements $x,y\in F^{\ast}- \OO^{\ast}$ the formula
\begin{align*}
\LI_{y}(q)=\LI_{x}(q)-\LI_{x}(y)
\end{align*}
holds.
\item\label{change2} Let $\tilde{q}\in F^{\ast}\otimes R$ be another period. Then the following conditions are equivalent:
\begin{enumerate}[(i)]
\item There exist $n,m\in R-\left\{0\right\}$ such that $q^{n}=\tilde{q}^{m}$
\item $\LI_{\lambda}(q)=\LI_{\lambda}(\tilde{q})$ for all continuous homomorphism $\lambda\colon F^{\ast}\to\Omega$ and all finite extensions $\Omega$ of $\Q_p$
\item There exists an element $x\in F^{\ast}- \OO^{\ast}$ such that $\LI_x(q)=\LI_x(\tilde{q})$
\end{enumerate} 
\end{enumerate}
\end{Lem}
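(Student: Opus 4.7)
The plan is to base everything on the elementary observation that any two branches of the $p$-adic logarithm on $F^*$ differ by a scalar multiple of $\ord$. Concretely, $\log_y$ and $\log_x - \LI_x(y)\,\ord$ are both homomorphisms $F^* \to F$ that vanish on $y$ and agree on $\OO^*$, hence coincide. Tensoring with $\id_R$ and applying both sides to $q$, then dividing through by $(\ord \otimes \id_R)(q) \neq 0$, yields part (a) immediately from the definitions of $\LI_x(q)$ and $\LI_y(q)$.

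For part (b), the implication (i) $\Rightarrow$ (ii) follows from the $\Z$-linearity of $\ord \otimes \id_R$ and $\lambda \otimes \id_R$: the exponents $n, m$ pull out as scalar factors and cancel in the ratio defining $\LI_\lambda$. The implication (ii) $\Rightarrow$ (iii) is immediate on taking $\Omega = F$ and $\lambda = \log_x$ for any $x \in F^* \setminus \OO^*$.

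The substantive step is (iii) $\Rightarrow$ (i). Put $d = (\ord \otimes \id_R)(q)$ and $\tilde d = (\ord \otimes \id_R)(\tilde q)$, both in $R \setminus \{0\}$, and form $a = q^{\tilde d}\tilde q^{-d} \in F^* \otimes R$. Rearranging the hypothesis $\LI_x(q) = \LI_x(\tilde q)$ gives $\tilde d\cdot(\log_x \otimes \id_R)(q) = d\cdot(\log_x \otimes \id_R)(\tilde q)$, so $a$ lies in the kernel of both $\ord \otimes \id_R$ and $\log_x \otimes \id_R$. Because $R$ is $\Z$-flat, tensoring the exact sequence $1 \to \OO^* \to F^* \xrightarrow{\ord} \Z \to 0$ with $R$ identifies the first kernel with $\OO^* \otimes R$; on $\OO^*$ the branch $\log_x$ agrees with the standard $p$-adic logarithm, whose kernel is $\mu(F)$, so by flatness $a \in \mu(F) \otimes R$. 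Since $\mu(F)$ is finite of some order $N$, we conclude $a^N = 1$, that is, $q^{N\tilde d} = \tilde q^{Nd}$, with $N\tilde d, Nd \in R \setminus \{0\}$. No genuine obstacle appears once one commits to tracking these tensor-product kernels via flatness of $R$ over $\Z$.
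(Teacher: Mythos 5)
Your proof is correct, and it is exactly the ``straightforward computation'' the paper alludes to without writing out: parts (a), (i)$\Rightarrow$(ii) and (ii)$\Rightarrow$(iii) are immediate from the definitions, and your treatment of (iii)$\Rightarrow$(i) — forming $a=q^{\tilde d}\tilde q^{-d}$, using flatness of $R$ over $\Z$ to identify the kernels of $\ord\otimes\id_R$ and $\log_x\otimes\id_R$ with $\OO^{\ast}\otimes R$ and $\mu(F)\otimes R$, and killing the latter by the finite order $N$ of $\mu(F)$ — is the right argument. The only point you elide is in (a): agreement on $\OO^{\ast}$ and vanishing at $y$ pins the two homomorphisms down only on the subgroup $\OO^{\ast}\langle y\rangle$ of index $|\ord(y)|$, so one should add that $F$ is torsion-free (a $\Q$-vector space) to conclude they coincide on all of $F^{\ast}$.
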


The following simple but cumbersome lemma will be needed later to ensure that the number of certain badly behaved cases is finite (see Theorem \ref{haupt}).
It should be skipped while reading the article for the first time.
\begin{Lem}\label{ugly}
Let $q^{B}\in F^{\ast}\otimes R$ be a period and $K$ an intermediate extension of $F/\Q_p$.
There exists a quadratic polynomial $f\in K[T]$ such that for all periods $q,q^{\prime}\in F^{\ast}\otimes R$ and elements $x,x^{\prime}\in F^{\ast}-\OO^{\ast}$ with
\begin{itemize}
\item $\LI_x(q)=-\LI_x(q^{B})$,
\item $\LI_{x^{\prime}}(q^{\prime})=-\LI_{x^{\prime}}(q^{B})$ and
\item $\LI_p(\No_{F/K}(q))^{2}= \LI_p(\No_{F/K}(q^{\prime}))^{2}$
\end{itemize}
we have:
$$f(\LI_p(\No_{F/K}(x)))=f(\LI_p(\No_{F/K}(x^{\prime}))).$$
\end{Lem}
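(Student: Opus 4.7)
The plan is to reduce everything to the standard branch $\log_p$, then descend from $F$ to $K$ along the norm, after which the first two hypotheses collapse into a single linear relation between the relevant $\LI$-invariants, and the third hypothesis reduces the claimed identity to a purely algebraic manipulation.

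First, I would derive the auxiliary identity $\LI_x(q)=\LI_p(q)-\LI_p(x)$, valid for any period $q\in F^{\ast}\otimes R$ and any $x\in F^{\ast}-\OO^{\ast}$. This follows from two applications of Lemma~\ref{changeofdir}(a): applying it with $y=p$ gives $\LI_p(q)=\LI_x(q)-\LI_x(p)$, while applying it to the period $p$ itself (together with $\LI_p(p)=0$) gives $\LI_x(p)=-\LI_p(x)$. Substituting into the first two hypotheses of the lemma transforms them into
\[\LI_p(q)+\LI_p(q^B)=2\,\LI_p(x),\qquad \LI_p(q^{\prime})+\LI_p(q^B)=2\,\LI_p(x^{\prime}).\]

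Next, I would invoke the standard local identities $\log_p\circ\No_{F/K}=\Tr_{F/K}\circ\log_p$ and $\ord_K\circ\No_{F/K}=d\cdot\ord_F$, where $d$ denotes the residue degree of $F/K$. Together these give $\LI_p(\No_{F/K}(y))=d^{-1}\Tr_{F/K}(\LI_p(y))$ for every period $y\in F^{\ast}\otimes R$. Applying $d^{-1}\Tr_{F/K}$ to the two displayed equations and abbreviating $M_y:=\LI_p(\No_{F/K}(y))$, I would obtain
\[M_q+M_{q^B}=2M_x,\qquad M_{q^{\prime}}+M_{q^B}=2M_{x^{\prime}}.\]

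Finally, solving for $M_q$ and $M_{q^{\prime}}$ and substituting into the third hypothesis $M_q^2=M_{q^{\prime}}^2$, the $M_{q^B}^2$ terms cancel and (after dividing by $4$) one is left with $M_x^2-M_{q^B}M_x=M_{x^{\prime}}^2-M_{q^B}M_{x^{\prime}}$. Hence $f(T):=T^2-M_{q^B}\,T$ satisfies the required conclusion, and by construction depends only on $q^B$ and on $K$. The only wrinkle --- and presumably what the author has in mind when warning that the lemma is cumbersome --- is that a priori $M_{q^B}$ lies in the fraction field of $K\otimes R$ rather than in $K$ itself; under the paper's implicit scalar-extension conventions this is inconsequential, and I do not anticipate a genuine obstacle in the execution.
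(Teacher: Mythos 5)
Your argument is correct and is essentially the paper's own proof: both reduce the first two hypotheses via Lemma \ref{changeofdir}(a) to $\LI_p(q)+\LI_p(q^B)=2\LI_p(x)$, push this through the norm/trace compatibility of $\log_p$ and $\ord$, and substitute into the third hypothesis to land on $f(T)=T^2-\LI_p(\No_{F/K}(q^B))\,T$. Your two refinements --- keeping the residue-degree factor in $\LI_p(\No_{F/K}(y))=d^{-1}\Tr_{F/K}(\LI_p(y))$, which the paper's intermediate line omits but which cancels anyway, and noting that the coefficient a priori lives in $K\otimes R$ rather than $K$ --- are harmless and do not change the argument.
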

\begin{proof}
By Lemma \ref{changeofdir} \eqref{first} we get $\LI_p(q)=-\LI_p(q^{B})+2\LI_p(x)$.
Since $p\in \Q_p\subseteq K$ this implies
\begin{align*}
\LI_p(\No_{F/K}(q))&=\Tr_{F/K}(\LI_p(q))\\
&=\Tr_{F/K}(-\LI_p(q^{B})+2\LI_p(x))\\
&=-\LI_p(\No_{F/K}(q^{B}))+2\LI_p(\No_{F/K}(x)).
\end{align*}
We can do the same calculation for $q^{\prime}$ and $x^{\prime}$.
Therefore, the equation in the third condition yields
\begin{align*}
&\LI_p(\No_{F/K}(x))^{2}\hspace{0.3em}-\LI_p(\No_{F/K}(q^{B}))\LI_p(\No_{F/K}(x))\\
=&\LI_p(\No_{F/K}(x^{\prime}))^{2}-\LI_p(\No_{F/K}(q^{B}))\LI_p(\No_{F/K}(x^{\prime})),
\end{align*}
which is the quadratic identity we were after.
\end{proof}

%\begin{Lem}
%Let $q,\tilde{q}\in F^{\ast}\otimes R$ be periods. Assume that
%\begin{enumerate}[(I)]
%\item there exists an element $x\in F^{\ast}- \OO^{\ast}$ such that $\LI_x(q)^2=\LI_x(\tilde{q})^2$ and
%\item there exists an intermediate extension $F/K/\Q_p$ and an element $y\in K^{\ast}- \OO^{\ast}$ such that
%$\LI_y(\No_{F/K}(x))\neq 0$ and $\LI_y(\No_{F/K}(q))^2=\LI_y(\No_{F/K}(\tilde{q}))^2$.
%\end{enumerate}
%Then the equality $\LI_y(\No_{F/K}(q))=\LI_y(\No_{F/K}(\tilde{q}))$ holds.

%If we further assume that
%\begin{enumerate}[(I)] \setcounter{enumi}{2}
%\item $\LI_y(\No_{F/K}(q))\neq \LI_y(\No_{F/K}(x))$
%\end{enumerate}
%holds, we have $\LI_x(q)=\LI_x(\tilde{q})$.
%\end{Lem}
%\begin{proof}
%By (I) we know that $\LI_x(q)=\pm\LI_x(\tilde{q})$.
%Applying Lemma \ref{changeofdir} \eqref{first} on both sides we get
%\begin{align}\label{sign}
%\LI_y(q)-\LI_y(x)&=\pm(\LI_y(\tilde{q})-\LI_y(x)).
%\end{align}
%Since $y$ is an element of $K$ it follows that
%\begin{align*}
%\left(\LI_y\left(\No_{F/K}(q)\right)-\LI_y\left(\No_{F/K}(x)\right)\right)^2
%&=\left([F:K]^{-1}\Tr_{F/K}\left(\LI_y(q)-\LI_y(x)\right)\right)^2\\
%&=\left([F:K]^{-1}\Tr_{F/K}\left(\LI_y(\tilde{q})-\LI_y(x)\right)\right)^2\\
%&=\left(\LI_y\left(\No_{F/K}(\tilde{q})\right)-\LI_y\left(\No_{F/K}(x)\right)\right)^2.
%\end{align*}
%This combined with (II) implies that $\LI_y(\No_{F/K}(q))=\LI_y(\No_{F/K}(\tilde{q}))$.
%The second claim follows by applying the trace to \eqref{sign} and using (III).
%\end{proof}

\subsection{Extensions of the Steinberg representation} \label{Steinberg}
Given a prodiscrete group $A$ we define the $A$-valued (continuous) Steinberg representation $\St(A)$ of $\PGL_2(F)$ as the space of continuous $A$-valued functions on $\PP(F)$ modulo constant functions.

For a continuous group homomorphism $\lambda\colon F^\ast\to A$ we define $\widetilde{\mathcal{E}}(\lambda)$ as the set of pairs $(\Phi,r)\in C(\GL_2(F),A)\times \Z$ with
$$
\Phi\left(g\cdot\begin{pmatrix}
t_1 & u \\ 0 & t_2
\end{pmatrix}\right)
=\Phi(g) + r\cdot \lambda(t_1)
$$
for all $t_1,t_2\in F^\ast$, $u\in F$ and $g\in\GL_2(F)$.
The group $\GL_2(F)$ acts on $\widetilde{\mathcal{E}}(\lambda)$ via $g.(\Phi(-),r)=(\Phi(g^{-1}\cdot-),r)$.
The subspace $\widetilde{\mathcal{E}}(\lambda)_0$ of tuples of the form $(\Phi,0)$ with constant $\Phi$ is $\GL_2(F)$-invariant.
We get an induced $\PGL_2(F)$-action on the quotient $\mathcal{E}(\lambda)=\widetilde{\mathcal{E}}(\lambda) /\widetilde{\mathcal{E}}(\lambda)_0$.

\begin{Lem}
Let $\pi\colon \PGL_{2}(F)\to \PP(F),\ g\mapsto g.\infty$, be the canonical projection. The following sequence of $\Z[\PGL_{2}(F)]$-modules is exact:
\begin{align*}
0\too\St(A)\xlongrightarrow{(\pi^{\ast},0)}\mathcal{E}(\lambda)\xlongrightarrow{(0,\id_\Z)}\Z\too 0
\end{align*}
We write $b_{\lambda}$ for the associated cohomology class in $\HH^1(\PGL_{2}(F),\St(A))$.
\end{Lem}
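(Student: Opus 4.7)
My plan is to verify exactness at each of the three spots, invoking the fact that $B(F)$ is the stabilizer of $\infty\in\PP^1(F)$ wherever possible; the only step with real content is exhibiting a preimage of $1\in\Z$ in $\mathcal{E}(\lambda)$.

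Well-definedness and equivariance are routine. Since $B(F)$ fixes $\infty$, any pullback $\pi^{*}f$ is right $B(F)$-invariant on $\GL_2(F)$, so $(\pi^{*}f,0)\in\widetilde{\mathcal{E}}(\lambda)$; constants on $\PP^1(F)$ pull back to constants, which lie in $\widetilde{\mathcal{E}}(\lambda)_0$, so the map descends to $\St(A)=C(\PP^1(F),A)/\text{constants}$. Both maps are $\PGL_2(F)$-equivariant, the first because $\pi$ is, and the second because the $\GL_2(F)$-action leaves $r$ unchanged and the center acts trivially modulo $\widetilde{\mathcal{E}}(\lambda)_0$. Injectivity of $(\pi^{*},0)$ is immediate from surjectivity of $\pi$: if $\pi^{*}f$ is constant, then so is $f$. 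Exactness in the middle is equally straightforward: a representative $(\Phi,0)$ automatically satisfies $\Phi(gb)=\Phi(g)$ for all $b\in B(F)$, so $\Phi$ descends through $\pi$ to a continuous $f\in C(\PP^1(F),A)$ with $\Phi=\pi^{*}f$, placing the class in the image.

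The only step demanding genuine work is surjectivity of $(0,\id_\Z)$: one must construct a continuous $\Phi\colon\GL_2(F)\to A$ with $\Phi(gb)=\Phi(g)+\lambda(t_1)$. I would do this by patching. The quotient $\PP^1(F)$ is compact, Hausdorff and totally disconnected, so it admits a finite partition $\PP^1(F)=\bigsqcup_iU_i$ into compact open subsets, and on each $U_i$ the $B(F)$-principal bundle $\pi$ has a continuous section $s_i\colon U_i\to\GL_2(F)$ by local triviality of principal bundles in the $p$-adic analytic category. For $g\in\GL_2(F)$ let $i$ be the unique index with $\pi(g)\in U_i$, write $g=s_i(\pi(g))\cdot b_g$ uniquely, and set $\Phi(g):=\lambda(t_1(b_g))$. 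Continuity on each open piece $\pi^{-1}(U_i)$ is clear, and since $\pi(gb)=\pi(g)$ forces $b_{gb}=b_g\cdot b$, one obtains $\Phi(gb)=\lambda(t_1(b_g)t_1(b))=\Phi(g)+\lambda(t_1)$ as required. The only potential obstacle — existence of the continuous local sections $s_i$ — is standard for $p$-adic analytic groups and is the place where the continuity condition in the definition of $\widetilde{\mathcal{E}}(\lambda)$ is genuinely used.
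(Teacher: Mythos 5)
Your proof is correct. Note that the paper itself does not argue the lemma at all: it simply cites Lemma 3.11 of \cite{Sp} (with the remark that the extension used here is normalized slightly differently), so your write-up is a self-contained verification of what the paper outsources. The routine parts are all fine: with the convention $g.\infty$ given by the M\"obius action, the stabilizer of $\infty$ is the upper-triangular Borel $B(F)$, which is exactly the subgroup appearing in the defining relation of $\widetilde{\mathcal{E}}(\lambda)$, so $\pi^{\ast}f$ is right $B(F)$-invariant, injectivity follows from surjectivity of $\pi$, and middle exactness from the fact that the orbit map $\GL_2(F)\to\PP(F)$ is an open quotient map, so a right $B(F)$-invariant continuous $\Phi$ descends to a continuous function on $\PP(F)$. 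The genuine content is, as you say, the surjectivity onto $\Z$, i.e.\ extending the continuous homomorphism $b\mapsto\lambda(t_1(b))$ on $B(F)$ to a continuous $1$-cochain on $\GL_2(F)$, and your patching argument via a finite partition of $\PP(F)$ into compact opens with continuous sections does this correctly (the identity $b_{gb}=b_g b$ is what makes the cocycle relation come out). The only place I would tighten the exposition is the appeal to general local triviality of principal bundles in the $p$-adic analytic category: here one can simply write down sections on the two affine charts, e.g.\ $z\mapsto\left(\begin{smallmatrix} z & 1\\ 1 & 0\end{smallmatrix}\right)$ on $\OO$ and $w=1/z\mapsto\left(\begin{smallmatrix} 1 & 0\\ w & 1\end{smallmatrix}\right)$ on the complementary chart, which makes the construction completely explicit and avoids invoking any general theory; this is essentially how the extension class is produced in \cite{Sp}.
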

\begin{proof}
See Lemma 3.11 (a) of \cite{Sp}.
Note that we use a slightly different extension than the one in \textit{loc.~cit.~}
\end{proof}

\section{Automorphic L-invariants}
For the rest of the article we fix a number field $F$ with ring of integers $\OO_F$.
If $v$ is a place of $F$, we denote by $F_{v}$ the completion of $F$ at $v$.
If $\p$ is a finite place, we let $\OO_{F_\p}$ denote the valuation ring of $F_\p$ and write $\ord_{\p}$ for the normalized valuation.
For a rational place $l\leq \infty$ we denote by $S_{l}=S_l(F)$ the set of places of $F$ lying above $l$.

Given a finite (possibly empty) set $S$ of places of $F$ we define the ``$S$-truncated adeles'' $\A_F^{S}$ as the restricted product of the completions $F_{v}$ over all places $v$ which are not in $S$. We often write $\A_F\sinfty$ instead of $\A_F^{S\cup S_{\infty}}$.
If $G$ is an algebraic group over $F$ and $v$ is a place of $F$, we write $G_v=G(F_v)$ and put $G_S=\prod_{v\in S} G_v$. 
If $K\subseteq G(\A)$ is a subgroup, we define $K^{S}$ as the image of $K$ under the quotient map $G(\A_F)\to G(\A_F^{S})$.

Further, we fix an inner form $\GG$ of the algebraic group $\GL_2/F$.
We will denote the centre of $\GG$ by $Z$ and put $\PGG=\GG/Z$.
If $\GG$ is split, we always identify $\GG$ and $\GL_2$.
Similarly, if $v$ is a place of $F$ at which $\GG$ is split we choose an isomorphism of $\GG_v$ with $\GL_2(F_v)$.
Given a place $l\leq\infty$ of $\Q$ we write $S_{l}(\GG)\subseteq S_l$ for the set of places of $F$ above $l$ at which $\GG$ is split.
We put
\begin{align*}
d_{\GG}=\begin{cases}
|S_{\infty}(\GG)|-1 & \mbox{if}\ \GG\ \mbox{is split,}\\
|S_{\infty}(\GG)| & \mbox{else.}
\end{cases}
\end{align*}

Following Spie\ss~(cf.~\cite{Sp}) we define automorphic $\LI$-invariants attached to cuspidal automorphic representations of $\GG(\A_F)$ which are cohomological with respect to the trivial coefficient system.
For split $\GG$ all omitted proofs can essentially be found in Section 4 and 5 of \textit{loc.~cit.~}
See also \cite{De}, Section 4, for the case of a non-trivial central character and \cite{BG2} respectively \cite{Felix} for the case that the group $\GG$ is non-split. 

\subsection{Modular symbols} \label{Modular Symbols}
We write $\Div(\PP(F))$ for the free abelian group on $\PP(F)$ and $\Div_0(\PP(F))$ for the kernel of the map $$\Div(\PP(F)) \to \Z,\ \sum_P m_P P \mapsto \sum_P m_P.$$ The $\GL_2(F)$-action on $\PP(F)$ induces an action on $\Div_0(\PP(F))$.
We put
$$
D_{\GG}=\begin{cases}
\Div_0(\PP(F)) & \mbox{if}\ \GG\ \mbox{is split},\\
\Z & \mbox{else.}
\end{cases}
$$
Given a place $\p\in S_p(\GG)$ and a prodiscrete abelian group $A$ we write $\St_\p(A)$ for the $A$-valued continuous Steinberg representation of $\GG_\p$.
For a subset $S\subseteq S_p(\GG)$  we put $\St_S(A)=\bigotimes_{\p \in S} \St_\p(A)$. Moreover, if $A=\Z$ with the discrete topology we simply write $\St_S=\St_S(\Z)$.
Given the data
\begin{itemize}
\item a ring $R$ and an $R$-module $N$ (equipped with the trivial $G(F)$-action),
\item subsets $S_0\subseteq S\subseteq S_p(\GG)$,
\item an open compact subgroup $K\subseteq \GG(\A_{F}^{S,\infty})$ and
\item a locally constant idele class character $\omega\colon \I_F/F^\ast\to R^{\ast}$ such that $\omega_v$ is trivial for all $v\in S\cup S_\infty$
\end{itemize} we define $\Ah(K,\omega,S_0;N)^{S}$ to be the space of all functions
\begin{align*}
\Phi\colon \GG(\A_F\sinfty)\too\Hom_\Z(\St_{S_0},\Hom_\Z(D_{\GG},N))
\end{align*}
such that $\Phi(gzk)=\omega(z)\Phi(g)$ for all $z\in Z(\A_F\sinfty)$, $k\in K$ and $g\in \GL_2(\A_F\sinfty)$.
By our assumptions the natural $\GG(F)$-action on $\Ah(K,\omega,S_0;N)^{S}$ factors through $\PGG(F)$.

\begin{Def}
For a datum as above and a continuous character $\epsilon\colon \PGG_\infty\to \{\pm 1\}$ we let $$\MS_{\GG}^{i}(K,\omega,S_0;N)^{S,\epsilon} = \HH^{i}(\PGG(F),\Ah(K,\omega,S_0,N)^{S}(\epsilon))$$ be the space of $N$-valued $S$-augmented $S_0$-special modular symbols of level $K$, central character $\omega$ and sign $\epsilon$.
Furthermore, we consider
$$\MS_{\GG}^{i}(\omega,S_0;N)^{S,\epsilon}= \varinjlim_K \MS_{\GG}^{i}(K,\omega,S_0;N)^{S,\epsilon}$$
with its natural $\GG(\A_F\sinfty)$-action.
\end{Def}

\begin{Pro}\label{FlachundNoethersch}
Given a datum as above we have:
\begin{enumerate}[(a)]
\item The $R$-module $\MS_{\GG}^i(K,\omega,S_0;R)^{S,\epsilon}$ is finitely generated for all $i\geq 0$ if $R$ is Noetherian.
\item\label{FN2} If $N$ is a flat $R$-module, then the canonical map
	\begin{align*}
	 \MS_{\GG}^i(K,\omega,S_0;R)^{S,\epsilon} \otimes_R N \to \MS_{\GG}^i(K,\omega,S_0;N)^{S,\epsilon}
	\end{align*}
	is an isomorphism for all $i \geq 0$.
\end{enumerate}
\end{Pro}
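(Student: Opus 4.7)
The strategy is to express these modular symbols as group cohomology of finitely many $S$-arithmetic subgroups of $\PGG(F)$ via Shapiro's lemma, and then reduce the computation to a bounded double complex of finitely generated modules, from which both statements follow by standard homological algebra.

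First, by strong approximation for $\GG$ combined with finiteness of class numbers, the double coset space $Y=\GG(F)\backslash\GG(\A_F\sinfty)/\bigl(K\cdot Z(\A_F\sinfty)\bigr)$ is finite. Choosing representatives $g_1,\dots,g_r$ and setting $\Gamma_j$ to be the image in $\PGG(F)$ of $\GG(F)\cap g_jKZ(\A_F\sinfty)g_j^{-1}$, the defining invariance of $\Ah(K,\omega,S_0;N)^{S}$ together with Shapiro's lemma yields
$$\MS_{\GG}^{i}(K,\omega,S_0;N)^{S,\epsilon}\;\cong\;\bigoplus_{j=1}^{r}\HH^{i}\!\left(\Gamma_j,\;\Hom_\Z(\St_{S_0}\otimes_\Z D_{\GG},\,N(\omega_j))(\epsilon)\right).$$
This reduces both assertions to statements about $S$-arithmetic cohomology with the stated coefficient module.

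Each $\Gamma_j$ is an $S$-arithmetic subgroup of $\PGG$ with $S=S_\infty\cup S_0$. By Borel-Serre (at archimedean places) combined with the Bruhat-Tits buildings at the places in $S_0$, $\Gamma_j$ is of type VFL: a torsion-free finite-index subgroup admits a finite resolution by finitely generated free $\Z$-modules. On the coefficient side, for every $\p\in S_0$ the Bruhat-Tits tree of $\GG_{\p}$ produces a short $\GG_{\p}$-equivariant resolution of $\St_{\p}$ by finitely generated compactly induced modules, and in the split case $D_{\GG}$ fits into a short exact sequence whose other terms have analogous finiteness properties after restricting to the $K$-level. Combining these resolutions and then applying $\Hom_\Z(-,R)$ presents the relevant cohomology as that of a bounded double complex of finitely generated $R$-modules. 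Since $R$ is Noetherian, part (a) follows termwise. For part (b), the termwise identity $\Hom_\Z(M,R)\otimes_R N\cong\Hom_\Z(M,N)$ for $\Z$-free $M$ of finite rank, together with the exactness of $-\otimes_R N$ under flatness, makes tensoring commute with the cohomology of this finite double complex.

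The main technical obstacle is producing the bounded, $\Gamma_j$-equivariant resolution of $\St_{S_0}\otimes D_{\GG}$ by finitely generated $\Z[\Gamma_j]$-modules; the principal difficulty is reconciling the continuous nature of $\St_{\p}$ with the need for finite generation, which is handled by the tree resolution (harmonic cocycles) and by exploiting the $K$-level structure to truncate $\Div(\PP(F))$ into finitely generated pieces. Once this resolution is in hand, the remaining steps are formal and rely only on Noetherianity of $R$ for (a) and flatness of $N$ for (b).
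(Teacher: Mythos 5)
The paper itself omits this proof, deferring to Sections 4--5 of Spie\ss~\cite{Sp} (and its quaternionic analogues in \cite{BG2}, \cite{Felix}), and your argument is essentially that one: finiteness of the double coset space plus Shapiro's lemma reduces everything to the cohomology of finitely many $S$-arithmetic subgroups $\Gamma_j$, whose Borel--Serre/Bruhat--Tits finiteness properties, combined with the tree resolution of the Steinberg module and the finiteness of cusps controlling $\Hom_\Z(D_{\GG},N)$, give finite generation over Noetherian $R$ and flat base change by formal homological algebra. The one imprecision is your claim that applying $\Hom_\Z(-,R)$ to these resolutions already yields a bounded double complex of finitely generated $R$-modules: the terms $\Hom_\Z(\text{c-Ind},R)$ are infinite products and are not finitely generated; finite generation enters only after a second application of Shapiro's lemma, using that $\Gamma_j$ acts on the cells of the Bruhat--Tits trees and on the cusps $\PP(F)$ with finitely many orbits and with stabilizers that are again of finite type (likewise, ``truncating $\Div(\PP(F))$ via the $K$-level'' should be ``finitely many $\Gamma_j$-orbits of cusps''), which is exactly how the cited proof proceeds.
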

\begin{proof}
If $\GG=\PGL_2$ and the field $F$ is totally real, this is Proposition 4.6 of \cite{Sp}. The proof also works in our more general setup.
\end{proof}

\subsection{Definition and basic properties} \label{Definition}
Let $\pi=\otimes_v\pi_v$ be a cuspidal representation of $\GG(\A_{F})$, which is cohomological with respect to the trivial coefficient system. 
By a result of Clozel (cf.~\cite{C}) there exists a smallest subfield $\Q_\pi\subseteq \C$ over which $\pi^{\infty}$ can be defined and $\Q_\pi$ is a finite extension of $\Q$.
Let $\mathcal{R}_\pi$ be the ring of integers of $\Q_\pi$ and $V_\pi$ the model of $\pi^\infty$ over $\Q_{\pi}$.
The Archimedean components of the central character $\omega_\pi$ of $\pi$ are trivial.
Therefore, $\omega_{\pi}$ takes values in $\Q_\pi^{\ast}$.
If $S\subseteq S_p(\GG)$ is a subset, $\Omega$ a $\Q_{\pi}$-algebra and $M$ an $\Omega[\GL_2(\A_F\sinfty)]$-module we put
\begin{align*}
M_{\pi}=\Hom_{\Omega[\GG(\A_F\sinfty)]}(V_\pi^{S}\otimes\Omega,M).
\end{align*}
Let $S_p(\pi)\subseteq S_p(\GG)$ denote the subset of those primes $\p\in S_p(\GG)$ such that $\pi_\p$ is the Steinberg representation.
As a straightforward generalization of \cite{Sp}, Proposition 4.8, we get:
\begin{Pro}
Let $\Omega$ be a $\Q_{\pi}$-algebra, $\epsilon\colon \PGG_\infty\to \left\{\pm 1\right\}$ a character and $S$ a subset of $S_p(\pi)$. Then, we have
\begin{align*}
\MS_{\GG}^i(\omega_\pi,S;\Omega)^{S,\epsilon}_{\pi}\cong
\begin{cases}
\Omega & \mbox{if}\ i=d_{\GG}, \\
0 & \mbox{if}\ i\leq d_{\GG}-1.
\end{cases}
\end{align*}
\end{Pro}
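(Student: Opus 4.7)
The plan is to prove the statement by induction on $|S|$, after first reducing to coefficients $\Omega=\Q_\pi$. Since $\Omega$ is $\Q_\pi$-flat, Proposition \ref{FlachundNoethersch}\ref{FN2} together with the observation that the $\pi$-isotypic functor $(-)_\pi$ commutes with flat base change at any fixed level $K$ allows one to deduce the general case from the case $\Omega=\Q_\pi$.

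For the base case $S=\emptyset$, strong approximation for $\PGG$ identifies
\begin{equation*}
\HH^i\bigl(\PGG(F),\Ah(K,\omega_\pi,\emptyset;\Q_\pi)^\emptyset(\epsilon)\bigr)
\end{equation*}
with a finite direct sum of group cohomologies $\HH^i(\Gamma_j,\Hom_\Z(D_\GG,\Q_\pi)(\epsilon))$ for arithmetic subgroups $\Gamma_j\subseteq\PGG(F)$. In the non-split case these compute cohomologies of compact locally symmetric spaces of dimension $d_\GG$, and Matsushima's formula combined with the hypothesis that $\pi$ is cohomological of trivial weight yields a one-dimensional cuspidal $\pi$-isotypic contribution in top degree $d_\GG$ and vanishing below. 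In the split case, the short exact sequence $0\to\Div_0(\PP(F))\to\Div(\PP(F))\to\Z\to 0$, combined with the cohomological triviality of $\Div(\PP(F))$ (which is induced from a Borel subgroup and so has vanishing positive-degree cohomology), shifts cohomological degree down by one, recovering the same conclusion with $d_\GG=|S_\infty(\GG)|-1$.

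For the inductive step, fix $\p\in S$ and set $S^{\prime}=S\setminus\{\p\}$. Applying $\Hom_\Z(-,\Hom_\Z(\St_{S^{\prime}},\Hom_\Z(D_\GG,\Q_\pi)))$ to the short exact sequence of $\GG_\p$-modules
\begin{equation*}
0\too\Q_\pi\too C(\PP(F_\p),\Q_\pi)\too\St_\p(\Q_\pi)\too 0
\end{equation*}
yields a short exact sequence of coefficient systems, hence a long exact cohomology sequence relating $S^{\prime}$-special and $S$-special modular symbols. The key point is that the $\pi$-isotypic component of the middle term (with $C(\PP(F_\p),\Q_\pi)$-valued coefficients at $\p$) vanishes: $C(\PP(F_\p),\Q_\pi)$ is induced from the Borel of $\GG_\p$, and since $\pi_\p$ is the Steinberg representation for $\p\in S_p(\pi)$, it occurs only as the top composition factor (not as a subrepresentation) of this induced module, killing the local $\Hom$. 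The long exact sequence then collapses into the desired degree-preserving isomorphism between the $\pi$-isotypic parts of $S^{\prime}$-special and $S$-special cohomology, completing the induction.

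The main obstacle is establishing the vanishing of the middle-term $\pi$-isotypic component rigorously; this ultimately rests on the non-split nature of the extension recalled in Section \ref{Steinberg} and the uniqueness of the Steinberg quotient of the unramified principal series. A secondary subtlety is bookkeeping the cohomological degree uniformly across the split and non-split cases, which is precisely why the definition of $d_\GG$ carries a case distinction so as to absorb the dimension shift coming from $\Div_0$ versus $\Z$ in the coefficients.
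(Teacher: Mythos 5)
Your overall architecture (reduce to $\Omega=\Q_\pi$, compute the case $S=\emptyset$ from cuspidal cohomology, then induct on $|S|$ via the extension $0\to\Q_\pi\to C(\PP(F_\p),\Q_\pi)\to\St_\p(\Q_\pi)\to 0$) is the right shape, and indeed the paper offers no proof of its own but quotes this statement from Spie\ss~(\cite{Sp}, Sections 4--5) and its variants in \cite{De}, \cite{BG2}, \cite{Felix}, where exact sequences of this kind do appear. Already in your base case, though, one claim is wrong as stated: $\Hom_\Z(\Div(\PP(F)),\Q_\pi)$ is (co)induced from the Borel $B(F)$, so by Shapiro its $\PGG(F)$-cohomology is the cohomology of the cusps/Borel, which does \emph{not} vanish in positive degrees; what vanishes is only its $\pi$-isotypic part, and that is exactly the nontrivial cuspidality/Eisenstein-cohomology input (Harder-type results) that the citation is carrying, not a formal triviality.

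The serious gap is the inductive step. First, the $\pi$-isotypic functor is $\Hom_{\Omega[\GG(\A_F^{S,\infty})]}(V_\pi^{S}\otimes\Omega,-)$, and $V_\pi^{S}$ has no component at $\p\in S$; so no ``local $\Hom$ at $\p$'' is being taken, and the observation that $\St_\p$ is only a quotient of $C(\PP(F_\p),\Q_\pi)$ cannot kill anything --- besides, the coefficient at $\p$ in your middle term is the dual $\Hom_\Z(C(\PP(F_\p),\Q_\pi),-)$, a distribution space in which the roles of sub and quotient are reversed. Second, the claimed vanishing is actually false: taking $F=\Q$, $S=\{p\}$, $d_\GG=0$, the long exact sequence begins with an injection of $\MS^0_\GG(\omega_\pi,S;\Omega)^{S,\epsilon}_\pi\cong\Omega$ into the degree-zero cohomology of the middle term, so the latter's $\pi$-part is nonzero (these invariants are precisely the distribution-valued modular symbols underlying the $p$-adic $L$-function). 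Third, even if the middle term did vanish, the connecting maps would give a degree-\emph{shifting} isomorphism $H^i(S'\text{-special})\cong H^{i+1}(S\text{-special})$, not a degree-preserving one, and the $S'$-special term in your sequence still carries the truncation superscript $S$, not $S'$, so it is not the object your induction hypothesis concerns; chasing the shift against your base case would even contradict the statement being proved. What the cited proofs actually have to do, and what is missing here, is the comparison between $S$-truncated and $S'$-truncated spaces (a Shapiro/limit argument over levels at $\p$) together with the use of the specific structure of the Steinberg component at $\p$ (one-dimensional Iwahori invariants, the non-split extension of Section \ref{Steinberg}, ordinarity) to control the middle term, rather than a blanket vanishing of its $\pi$-part.
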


For the remainder of this section we fix a place $\tilde{\p} \in S_p(\pi)$. 
Let $\Omega$ be a finite extension of $\Q_p$.
Every element in $\Hom(\St_{\tilde{\p}},\mathcal{R}_\pi)$ can be uniquely extended to a continuous functional on $\St_{\tilde{\p}}(\Omega\otimes \mathcal{R}_\pi)$. Thus we get a pairing
\begin{align*}
\Ah(K,\omega_\pi,S;\mathcal{R}_\pi)^{S}\times \St_{\tilde{\p}}(\Omega) \too 
\Ah(K,\omega_\pi,S-\left\{\tilde{\p}\right\};\mathcal{R}_\pi)^{S}\otimes \Omega
\end{align*}
for every open compact subgroup $K\subseteq \GG(\A_F\sinfty)$ and every subset $S\subseteq S_p(\pi)$ which contains $\tilde{\p}$. Hence, using Proposition \ref{FlachundNoethersch} \eqref{FN2} we get a cup product pairing
\begin{align*}
\MS_{\GG}^{i}(\omega_\pi,S;\Omega\otimes\mathcal{R}_\pi)^{S,\epsilon}\times \HH^{j}(\PGL_2(F),\St_{\tilde{\p}}(\Omega))
\xlongrightarrow{\cup}\MS_{\GG}^{i+j}(\omega_\pi,S-\left\{\tilde{\p}\right\};\Omega\otimes \mathcal{R}_\pi)^{S,\epsilon}
\end{align*}
which commutes with the $\GG(\A_F\sinfty)$-action.

Let $\lambda\colon F_{\tilde{\p}}^{\ast}\to\Omega$ be a continuous homomorphism and $b_{\lambda}\in \HH^{1}(\PGL_2(F),\St_{\tilde{\p}}(\Omega))$ be the (restriction of the) cohomology class associated to $\lambda$ in Section \ref{Steinberg}. 
Taking the cup product with $b_{\lambda}$ induces a map
$$
b_{\lambda}(\pi)^{\epsilon}\colon \MS_{\GG}^{d_{\GG}}(\omega_\pi,S;\Omega\otimes\mathcal{R}_\pi)^{S,\epsilon}_{\pi}\xrightarrow{\_\cup b_{\lambda}} \MS_{\GG}^{d_{\GG}+1}(\omega_\pi,S-\left\{\tilde{\p}\right\};\Omega\otimes \mathcal{R}_\pi)^{S,\epsilon}_{\pi}.
$$

Similarly as in Lemma 5.2 of \cite{Sp} one can prove the following lemma.
\begin{Lem}
Let $\ord_{\tilde{\p}}\colon F_{\tilde{\p}}^{\ast}\to \Z\subseteq \Omega$ be the normalized valuation.
Then $b_{\ord_{\tilde{\p}}}(\pi)^{\epsilon}$ is an isomorphism of free $\Omega\otimes\mathcal{R}_\pi$-modules of rank one.
\end{Lem}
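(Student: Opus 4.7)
The plan is to realise the map $b_{\ord_{\tilde{\p}}}(\pi)^{\epsilon}$ as a connecting homomorphism in a long exact sequence of modular-symbol cohomology induced by the defining extension
$$0\too \St_{\tilde{\p}}(\Omega)\too \mathcal{E}(\ord_{\tilde{\p}})\too \Omega\too 0$$
of Section \ref{Steinberg}, and to force it to be an isomorphism by showing that the neighbouring terms vanish on the $\pi$-isotypic component. By Proposition \ref{FlachundNoethersch}\eqref{FN2} it suffices to replace $\mathcal{R}_\pi$ by its fraction field, so that the claim reduces to showing that both source and target are one-dimensional and that the cup-product map is nonzero.

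The source is one-dimensional by the preceding proposition. For the target one runs the same long-exact-sequence machine on the \textit{other} extension of $\Omega$ by $\St_{\tilde{\p}}(\Omega)$, namely
$$0\too \St_{\tilde{\p}}(\Omega)\too C(\PP(F_{\tilde{\p}}),\Omega)\too \Omega\too 0,$$
combined with the cup-product pairing of Section \ref{Definition} in the index $S_0 = S-\{\tilde{\p}\}$; removing one Steinberg factor from the coefficients raises the ``correct'' cohomological degree by one, so the proposition's conclusion for $S-\{\tilde{\p}\}$ in degree $d_\GG$ translates into a one-dimensional statement for the target in degree $d_\GG+1$. The $C(\PP(F_{\tilde{\p}}),\Omega)$-coefficient terms drop out of the $\pi$-isotypic component because $\pi_{\tilde{\p}}$ is Steinberg and so admits no nonzero $\GG_{\tilde{\p}}$-equivariant map into (the relevant smooth subquotients of) $C(\PP(F_{\tilde{\p}}),\Omega)$.

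Applying the analogous machine to $0\to\St_{\tilde{\p}}(\Omega)\to\mathcal{E}(\ord_{\tilde{\p}})\to\Omega\to 0$, the connecting map in degree $d_\GG\to d_\GG+1$ coincides with $\_\cup b_{\ord_{\tilde{\p}}}$ by the very definition of $b_{\ord_{\tilde{\p}}}$. It is then injective and surjective on $\pi$-isotypic parts provided
$$\MS_{\GG}^{i}(\omega_\pi,S-\{\tilde{\p}\};\mathcal{E}(\ord_{\tilde{\p}})\otimes\mathcal{R}_\pi)^{S,\epsilon}_{\pi}=0\qquad \text{for } i=d_\GG,d_\GG+1,$$
which is the crux of the argument.

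The main obstacle is this vanishing. Unlike $C(\PP(F_{\tilde{\p}}),\Omega)$, the module $\mathcal{E}(\ord_{\tilde{\p}})$ is not a smooth $\GG_{\tilde{\p}}$-representation, so the vanishing cannot be read off from the usual spherical/non-spherical dichotomy. Instead one exploits the explicit description of $\mathcal{E}(\ord_{\tilde{\p}})$ as an $\ord$-twisted space of functions on $\GG_{\tilde{\p}}$: the relevant $\pi_{\tilde{\p}}$-Steinberg isotypic contribution is controlled by the action of a uniformiser $\varpi_{\tilde{\p}}$, and since $\ord_{\tilde{\p}}(\varpi_{\tilde{\p}})=1$ while the Steinberg acts by a normalised unit, the resulting eigenvalue equation has no nonzero solution, giving the vanishing. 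Once this is secured, the long exact sequence forces $b_{\ord_{\tilde{\p}}}(\pi)^{\epsilon}$ to be an isomorphism of free rank-one $\Omega\otimes\mathcal{R}_\pi$-modules.
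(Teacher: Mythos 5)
Your overall architecture is the right one, and it is essentially the strategy of the proof the paper defers to (Spie\ss~\cite{Sp}, Sections 4--5): view cup product with $b_{\ord_{\tilde{\p}}}$ as the connecting homomorphism of the long exact sequence obtained by dualizing $0\to\St_{\tilde{\p}}\to\mathcal{E}(\ord_{\tilde{\p}})\to\Z\to 0$ inside the coefficient modules $\Ah(\cdot)$, and deduce bijectivity on $\pi$-isotypic parts from the vanishing of the intermediate $\mathcal{E}(\ord_{\tilde{\p}})$-cohomology in degrees $d_{\GG}$ and $d_{\GG}+1$. But exactly at the step you yourself identify as the crux, the argument is not a proof. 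The claim that ``the relevant Steinberg-isotypic contribution is controlled by the action of a uniformiser, and since $\ord_{\tilde{\p}}(\varpi_{\tilde{\p}})=1$ while the Steinberg acts by a normalised unit, the eigenvalue equation has no nonzero solution'' has no content as stated: $\varpi_{\tilde{\p}}$ is not central in $\PGG_{\tilde{\p}}$, it does not act by a scalar on $\St_{\tilde{\p}}$ or on $\mathcal{E}(\ord_{\tilde{\p}})$, and you have not constructed any Hecke or $U_{\tilde{\p}}$-type operator on these non-smooth coefficient modules whose eigenvalues could be compared. The vanishing you need is genuinely the substance of the lemma; in \cite{Sp} it is obtained from an explicit description of $\mathcal{E}(\ord_{\tilde{\p}})$ in terms of the Bruhat--Tits tree at $\tilde{\p}$ (Busemann-type functions attached to vertices), which via Shapiro's lemma converts the cohomology with $\Hom_\Z(\mathcal{E}(\ord_{\tilde{\p}}),\cdot)$-coefficients into arithmetic cohomology at maximal level at $\tilde{\p}$, whose $\pi$-part vanishes because the Steinberg representation $\pi_{\tilde{\p}}$ has no nonzero vectors fixed under a maximal compact subgroup. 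Nothing playing this role appears in your sketch.

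Two further points. First, your auxiliary sequence $0\to\St_{\tilde{\p}}(\Omega)\to C(\PP(F_{\tilde{\p}}),\Omega)\to\Omega\to 0$ does not exist: by definition the constants are the submodule and the Steinberg is the quotient, so the exact sequence runs $0\to\Omega\to C(\PP(F_{\tilde{\p}}),\Omega)\to\St_{\tilde{\p}}(\Omega)\to 0$; what you want is its $\Hom$-dual, and you should say so. Second, the assertion that ``removing one Steinberg factor from the coefficients raises the correct cohomological degree by one'', which you use to make the target one-dimensional in degree $d_{\GG}+1$, is not covered by the Proposition in the paper (which treats $S$-special, $S$-augmented symbols in degrees $\leq d_{\GG}$ only); it is a statement of the same depth as the lemma and is proved in \cite{Sp} by the same tree/Shapiro mechanism sketched above. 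There are also routine but necessary verifications you pass over: exactness of the dualized sequence at the level of the modules $\Ah(K,\omega_\pi,\cdot;\cdot)^{S}$, the unique-continuous-extension step needed to pair integral modular symbols with $\St_{\tilde{\p}}(\Omega)$, and the reduction of ``free of rank one over $\Omega\otimes\mathcal{R}_\pi$'' to the field case via Proposition \ref{FlachundNoethersch}. As it stands, the proposal records the correct skeleton but leaves the decisive vanishing (and the dimension count for the target) unproved.
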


\begin{Def}
The (automorphic) $\LI$-invariant $\LI_{\lambda}(\pi,\tilde{\p})^{\epsilon}\in\Omega\otimes\mathcal{R}_\pi$ of $\pi$ at $\p$ with respect to $\lambda$ and sign $\epsilon$ is defined by the relation
$$b_{\lambda}(\pi)^{\epsilon}=\LI_{\lambda}(\pi,\tilde{\p})^{\epsilon}\cdot b_{\ord_{\tilde{\p}}}(\pi)^{\epsilon}.$$
In case $\lambda=\log_p \circ \No_{F_{\tilde{\p}}/\Q_p}$ we put $\LI^{\cyc}(\pi,\tilde{\p})^{\epsilon}=\LI_{\lambda}(\pi,\tilde{\p})^{\epsilon}.$
In case $\lambda=\log_x$ with $x\in F_{\tilde{\p}}^{\ast}-\mathcal{O}_{\tilde{\p}}^{\ast}$ we put $\LI_x(\pi,\tilde{\p})^{\epsilon}=\LI_{\lambda}(\pi,\tilde{\p})^{\epsilon}.$
If $\epsilon$ is the trivial character, we drop it from the notation.
\end{Def}
It is easy to see that $\LI_{\lambda}(\pi,\tilde{\p})^{\epsilon}$ is independent of the choice of set $S\subseteq S_p(\pi)$ containing $\tilde{\p}$.
Note that in case that $F$ is totally imaginary, automorphic $\LI$-invariants do not depend on a choice of sign at infinity.

\begin{Pro}[Invariance under twists]\label{twist} Let $\chi\colon \I_F/F^{\ast}\to \C^{\ast}$ be a locally constant character with $\chi_{\tilde{\p}}=1$. Then we have the following equality:
\begin{align*}
\LI_{\lambda}(\pi,\tilde{\p})^{\epsilon}=\LI_{\lambda}(\pi\otimes\chi,\tilde{\p})^{\epsilon\chi_\infty}
\end{align*}
\end{Pro}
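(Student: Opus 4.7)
The plan is to construct an explicit twisting map on modular symbols induced by $\chi\circ\det$ and to verify that it intertwines the cup-product maps which define the two $\LI$-invariants. Because $\LI_\lambda(\pi,\tilde\p)^\epsilon$ does not depend on the choice of $S$, we may take $S=\{\tilde\p\}$; then $\chi_v=1$ for every $v\in S$ by the hypothesis $\chi_{\tilde\p}=1$. After enlarging $\Omega$ to contain the (finitely many) values of $\chi^\infty$, choose an open compact $K\subseteq\GG(\A_F\sinfty)$ small enough that $\chi^{S,\infty}$ is trivial on $\det(K)$, and define
\[
T_\chi\colon\Ah(K,\omega_\pi,S;\Omega\otimes\mathcal{R}_\pi)^{S}\longrightarrow\Ah(K,\omega_{\pi\otimes\chi},S;\Omega\otimes\mathcal{R}_\pi)^{S},\qquad\Phi\longmapsto\bigl[g\mapsto\chi^{S,\infty}(\det g)\,\Phi(g)\bigr].
\]
Using $\omega_{\pi\otimes\chi}=\omega_\pi\chi^{2}$ and the product relation $\chi_\infty\chi_S\chi^{S,\infty}=\chi|_{F^\ast}=1$ together with $\chi_S=1$, a direct calculation shows that $T_\chi$ is well-defined and that, for every $\gamma\in\GG(F)$,
\[
T_\chi(\gamma.\Phi)=\chi^{S,\infty}(\det\gamma)\cdot\gamma.T_\chi(\Phi)=\chi_\infty(\det\gamma)\cdot\gamma.T_\chi(\Phi),
\]
the last equality because $\chi_\infty$ takes values in $\{\pm1\}$. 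Hence $T_\chi$ is $\PGG(F)$-equivariant once the action on the target is twisted by the sign character $\chi_\infty\circ\det$, and it intertwines the $\GG(\A_F\sinfty)$-action up to the scalar $\chi^{S,\infty}\circ\det$, which on Hecke eigenspaces realises $\pi\rightsquigarrow\pi\otimes\chi$.

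Passing to group cohomology and extracting isotypic components yields isomorphisms
\[
T_\chi\colon\MS_\GG^i(\omega_\pi,S;\Omega\otimes\mathcal{R}_\pi)^{S,\epsilon}_\pi\xrightarrow{\;\sim\;}\MS_\GG^i(\omega_{\pi\otimes\chi},S;\Omega\otimes\mathcal{R}_\pi)^{S,\epsilon\chi_\infty}_{\pi\otimes\chi}\qquad(i=d_\GG,\,d_\GG+1).
\]
The decisive compatibility is that $T_\chi$ commutes with the cup-product pairing used to build $b_\lambda(\pi)^\epsilon$ and $b_{\ord_{\tilde\p}}(\pi)^\epsilon$. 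This pairing is induced by evaluation against $\St_{\tilde\p}(\Omega)$ and thus only touches the $\tilde\p$-component on the $\GG(\A_F\sinfty)$ side, whereas the twisting factor $\chi^{S,\infty}(\det g)$ carries no $\tilde\p$-component (precisely because $\tilde\p\in S$). Consequently the diagrams
\[
b_\lambda(\pi\otimes\chi)^{\epsilon\chi_\infty}\circ T_\chi=T_\chi\circ b_\lambda(\pi)^\epsilon,\qquad b_{\ord_{\tilde\p}}(\pi\otimes\chi)^{\epsilon\chi_\infty}\circ T_\chi=T_\chi\circ b_{\ord_{\tilde\p}}(\pi)^\epsilon
\]
commute, and comparing the defining relation $b_\lambda(\pi)^\epsilon=\LI_\lambda(\pi,\tilde\p)^\epsilon\cdot b_{\ord_{\tilde\p}}(\pi)^\epsilon$ with its $\pi\otimes\chi$-analogue through the isomorphism $T_\chi$ gives the claimed equality.

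The main technical obstacle is the bookkeeping in the construction of $T_\chi$: one has to split $\chi$ into its local components and apply $\chi|_{F^\ast}=1$ and $\chi_{\tilde\p}=1$ at exactly the right moments in order to (i) identify the resulting extra sign on $\PGG(F)$ as $\chi_\infty$ and not some other combination, (ii) confirm that the central character of $T_\chi(\Phi)$ is $\omega_\pi\chi^{2}=\omega_{\pi\otimes\chi}$, and (iii) check that on isotypic components the factor $\chi^{S,\infty}\circ\det$ genuinely realises the twist $\pi\rightsquigarrow\pi\otimes\chi$ at the level of Hecke eigensystems.
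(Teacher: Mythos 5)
Your proposal is correct and is essentially the argument the paper has in mind: the proposition is stated without proof, with the reader referred to Sections 4--5 of \cite{Sp}, where (as in your plan) the twist is implemented by multiplying modular symbols by $\chi^{S,\infty}\circ\det$, the relation $\chi|_{F^\ast}=1$ together with $\chi_{\tilde{\p}}=1$ produces exactly the extra sign $\chi_\infty$ on $\PGG(F)$ (hence the change $\epsilon\mapsto\epsilon\chi_\infty$), and the scalar nature of the twisting factor makes it commute with the cup products against $b_\lambda$ and $b_{\ord_{\tilde{\p}}}$. The only points to tidy up are routine: the coefficient field must be enlarged to contain the values of $\chi$ (so that $V_{\pi\otimes\chi}\otimes\Omega\cong(V_\pi\otimes\chi\circ\det)\otimes\Omega$ and one may compare $\mathcal{R}_\pi$ with $\mathcal{R}_{\pi\otimes\chi}$), which is harmless since $\LI$-invariants are insensitive to finite extension of scalars, as you indicate.
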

\begin{proof}
A special case of the assertion is proven in \cite{Sp}, Lemma 5.5.
The same proof works in the more general setting.
\end{proof}

\begin{Lem}[Automorphic periods]\label{autperiod} There exists a period $q(\pi,\tilde{\p})^{\epsilon}\in F_{\tilde{\p}}^{\ast}\otimes \mathcal{R}_\pi$
such that
$$\LI_{\lambda}(\pi,\tilde{\p})^{\epsilon}=\LI_{\lambda}(q(\pi,\tilde{\p})^{\epsilon})$$
holds for all continuous homomorphism $\lambda\colon F_{\tilde{\p}}^{\ast}\to\Omega$ and all finite extensions $\Omega$ of $\Q_p$.
\end{Lem}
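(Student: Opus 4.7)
The idea is to construct $q$ by hand after choosing a uniformizer and then verify the defining equality using two structural features of the class $b_\lambda$ of Section~\ref{Steinberg}: it is $\Omega$-linear in $\lambda$, and it is natural in the target (a continuous $\Q_p$-linear map $f\colon\Omega\to\Omega'$ sends $b_\lambda$ to $b_{f\circ\lambda}$). Both properties follow from the construction of $\mathcal{E}(\lambda)$: linearity because $\Phi_1\mapsto c\Phi_1$ scales the associated $1$-cocycle and because $\mathcal{E}(\lambda_1+\lambda_2)$ is identified with the Baer sum of $\mathcal{E}(\lambda_1)$ and $\mathcal{E}(\lambda_2)$ via $((\Phi_1,r),(\Phi_2,r))\mapsto(\Phi_1+\Phi_2,r)$, and naturality via the push-forward $(\Phi,r)\mapsto(f\circ\Phi,r)$. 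Both properties are inherited by $\LI_\lambda(\pi,\tilde{\p})^{\epsilon}$ because this invariant is read off from $b_\lambda$ via cup product and comparison with the generator $b_{\ord_{\tilde{\p}}}(\pi)^{\epsilon}$ of the preceding lemma.

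Next I fix a uniformizer $\varpi$ of $F_{\tilde{\p}}$ and set $\alpha:=\LI_{\log_\varpi}(\pi,\tilde{\p})^{\epsilon}\in F_{\tilde{\p}}\otimes\mathcal{R}_\pi$. For $N$ large enough the $p$-adic logarithm restricts to a $\Z_p$-module isomorphism $\log\colon 1+\varpi^N\OO_{\tilde{\p}}\xrightarrow{\sim}\varpi^N\OO_{\tilde{\p}}$. Writing $\alpha=\sum_i a_i\otimes r_i$ with $a_i\in F_{\tilde{\p}}$ and $r_i\in\mathcal{R}_\pi$, I pick $k\geq 0$ with $p^k a_i\in\varpi^N\OO_{\tilde{\p}}$ for every $i$ and let $u_i\in 1+\varpi^N\OO_{\tilde{\p}}$ be the unique element with $\log(u_i)=p^k a_i$. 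Then I set
\[
q:=\bigg(\prod_i (u_i\otimes r_i)\bigg)\cdot(\varpi\otimes p^k)\in F_{\tilde{\p}}^{\ast}\otimes\mathcal{R}_\pi,
\]
the products denoting the multiplicative group law. A short computation gives $(\ord\otimes\id)(q)=p^k\neq 0$, so $q$ is a period, and $(\log_\varpi\otimes\id)(q)=p^k\alpha$; in particular $\LI_{\log_\varpi}(q)=\alpha$.

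To verify $\LI_\lambda(q)=\LI_\lambda(\pi,\tilde{\p})^{\epsilon}$ for every continuous $\lambda\colon F_{\tilde{\p}}^{\ast}\to\Omega$, I use that $\lambda$ must kill the finite torsion subgroup of $F_{\tilde{\p}}^{\ast}$ (since $\Omega$ is torsion-free) and that any continuous $\Z_p$-linear map out of $1+\varpi^N\OO_{\tilde{\p}}$ extends uniquely to a $\Q_p$-linear map $F_{\tilde{\p}}\to\Omega$. This yields a unique decomposition $\lambda=a\cdot\ord_{\tilde{\p}}+\sigma\circ\log_\varpi$ with $a\in\Omega$ and $\sigma\in\Hom_{\Q_p}(F_{\tilde{\p}},\Omega)$. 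By linearity of both sides in $\lambda$ it suffices to check each summand. The case $\lambda=\ord_{\tilde{\p}}$ is immediate. For $\lambda=\sigma\circ\log_\varpi$, naturality in the target yields $\LI_\lambda(\pi,\tilde{\p})^{\epsilon}=(\sigma\otimes\id)(\alpha)$, while the construction of $q$ gives $\LI_\lambda(q)=p^{-k}(\sigma\otimes\id)(p^k\alpha)=(\sigma\otimes\id)(\alpha)$.

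The main obstacle is a careful reading of the construction of Section~\ref{Steinberg} to confirm linearity in $\lambda$ and naturality in the target; neither is deep but both are needed and neither is recorded explicitly in the excerpt. Once they are in hand, everything else reduces to elementary facts about the structure of $F_{\tilde{\p}}^{\ast}$ and the surjectivity of the $p$-adic logarithm onto an open subgroup of finite index.
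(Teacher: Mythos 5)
Your argument is correct, but it takes a different route from the paper only in the sense that the paper gives no direct argument at all: its proof of Lemma \ref{autperiod} is a pointer to the construction in the proof of Theorem 4.5 of \cite{BG2}, asserted to generalize verbatim. What you supply is a self-contained version of that construction, resting on the duality between $\Hom_{\cont}(F_{\tilde{\p}}^{\ast},\Omega)$ and $F_{\tilde{\p}}^{\ast}\otimes\Q_p$: linearity of $\lambda\mapsto b_{\lambda}$ (Baer sum and scaling on the extensions $\mathcal{E}(\lambda)$ of Section \ref{Steinberg}), compatibility with change of the target field, the decomposition $\lambda=a\cdot\ord_{\tilde{\p}}+\sigma\circ\log_{\varpi}$, and the explicit period with prescribed $\ord_{\tilde{\p}}$- and $\log_{\varpi}$-components; this is exactly the mechanism the paper outsources, so the mathematical content matches even though your write-up replaces a citation by an argument. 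Two small points deserve an explicit sentence. First, when you pass from naturality of $b_{\lambda}$ to naturality of the invariant along a map $f\colon\Omega\to\Omega'$ that is merely continuous and $\Q_p$-linear (not a ring homomorphism), the induced coefficient-change map on $\MS_{\GG}^{\bullet}(\omega_\pi,\cdot\,;\Omega\otimes\mathcal{R}_\pi)^{S,\epsilon}_{\pi}$ is only $(f\otimes\id)$-semilinear on vectors coming from the rational structure; so evaluate the defining relation $b_{\lambda}(\pi)^{\epsilon}=\LI_{\lambda}(\pi,\tilde{\p})^{\epsilon}\cdot b_{\ord_{\tilde{\p}}}(\pi)^{\epsilon}$ on a generator defined over $\Q_p\otimes\mathcal{R}_\pi$, which exists by Proposition \ref{FlachundNoethersch} (equivalently, reduce by $\Omega$-linearity to $\sigma$ in a $\Q_p$-basis of $\Hom_{\Q_p}(F_{\tilde{\p}},\Q_p)$ and use honest flat base change $\Q_p\hookrightarrow\Omega$). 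Second, in the decomposition of $\lambda$ the subgroup $\mu\cdot(1+\varpi^{N}\OO_{\tilde{\p}})$ has finite index in $\OO_{\tilde{\p}}^{\ast}$, so the agreement of $\lambda$ and $\sigma\circ\log_{\varpi}$ on it extends to all of $\OO_{\tilde{\p}}^{\ast}$ only after invoking torsion-freeness of $\Omega$ (take $M$-th powers); you gesture at this, but it should be said. Neither point is a gap, just bookkeeping, and with them included your proof is complete.
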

\begin{proof}
In case that the central character is trivial the automorphic period is constructed in the proof of\cite{BG2}, Theorem 4.5.
The construction can be generalized verbatim to our situation.
\end{proof}

\section{Exercises in base change}
From now on we fix a cuspidal automorphic representation $\pi$ of $\GL_2(\A_F)$, which is cohomological with respect to the trivial coefficient system.
We want to study the behaviour of automorphic $\LI$-invariants attached to $\pi$ under base change.
To this end let $E$ be a finite abelian extension of $F$ with Galois group $\G$.
We will always identify characters of $\G$ with idele class characters via the Artin reciprocity map.

Let $\pi_E$ the base change of $\pi$ to $\GL_2(\A_E)$.
By the cyclic base change theorem of Arthur and Clozel (cf.~\cite{AC}) we know that $\pi_E$ is an automorphic representation.
If $\pi$ is Steinberg at a prime $\p$ of $F$, we have that $\pi_E$ is Steinberg at all primes of $E$ that divide $\p$ (see \cite{AC},  Lemma 6.12).
Therefore, $\pi_E$ is cuspidal if $\pi_\p$ is Steinberg for at least one finite place $\p$ of $F$.
Thus, we may always assume that $\pi_E$ is cuspidal.
\subsection{Galois action} \label{Galois}
The Galois group $\G$ acts on $\GL_2(\A_E^{\infty})$ via its action on the coefficients.
If $\q$ is a prime of $E$ and $\sigma$ is an element of $\G$, we set $\q^{\sigma}=\sigma^{-1}(\q)$.
If $\lambda\colon E_\q^{\ast}\to \Omega$ is a continuous homomorphism with values in a finite extension of $\Q_p$, we put
$$\lambda^{\sigma}\colon E_{\q^{\sigma}}^{\ast}\xlongrightarrow{\sigma}E_\q^{\ast}\xlongrightarrow{\lambda}\Omega.$$
For every locally discrete group $A$ the isomorphism $\PP(E_{\q^{\sigma}})\xrightarrow{\sigma}\PP(E_\q)$ induces an isomorphism
$$\sigma_\ast\colon \St_{\q^{\sigma}}(A)\too\St_{\q}(A).$$
Similarly, we get an isomorphism
$$\sigma_\ast\colon \Div_0(\PP(E))\too \Div_0(\PP(E)),\ \sum_P m_P P\mapstoo \sum_P m_P \sigma(P).$$
Given a finite set $S=\left\{\q_1,\ldots,\q_r\right\}$ of finite place we put $S^{\sigma}=\left\{\q_1^\sigma,\ldots,\q_r^\sigma\right\}$.
If $\epsilon\colon\PGL_2(E_\infty)\to \left\{\pm 1\right\}$ is a locally constant character, we set
$$\epsilon^{\sigma}\colon \PGL_2(E_\infty)\xlongrightarrow{\sigma}\PGL_2(E_\infty)\xlongrightarrow{\epsilon}\left\{\pm 1\right\}.$$

Let $S_p(\pi_{E})\subseteq S_p(E)$ denote the set of all primes $\q$ of $E$ which divide $p$ and such that $\pi_{E,\q}$ is Steinberg. 
Given subsets $S_0\subseteq S\subseteq S_p(\pi_{E})$, an open compact subgroup $K\subset \GL_2(\A_E^{S,\infty})$, a locally constant character $\epsilon\colon \PGL_2(E_\infty)\to \left\{\pm 1\right\}$ and an $\mathcal{R}_\pi$-module $N$ the map
$$\sigma\colon\Ah(K,\omega_{\pi_{E}},S_0;N)^{S}(\epsilon)
\too\Ah(\sigma^{-1}(K),\omega_{\pi_{E}},S_0^{\sigma};N)^{S^{\sigma}}(\epsilon^{\sigma})$$
given by
$$(\sigma.\Phi)(g)(f)(D)=\Phi(\sigma(g))(\sigma_\ast(f))(\sigma_\ast D)$$
for $g\in \GL_2(\A_E^{S^{\sigma},\infty})$, $f\in \St_{S_0^{\sigma}}$ and $D\in \Div_0(\PP(E))$,
is an isomorphism, which is compatible with the homomorphism
$$
\PGL_2(E)\too \PGL_{2}(E),\ g\mapstoo \sigma(g).
$$
Therefore, we get a map in cohomology
\begin{align*}
\MS_{\GL_{2,E}}^{i}(\omega_{\pi_{E}},S_0;\mathcal{R}_\pi\otimes\Omega)^{S,\epsilon}
\xlongrightarrow{\sigma}\MS_{\GL_{2,E}}^{i}(\omega_{\pi_{E}},\sigma^{-1}(S_0);\mathcal{R}_\pi\otimes\Omega)^{S^{\sigma},\epsilon^{\sigma}},
\end{align*}
which is compatible with the isomorphism
$$\GL_2(\A_E\sinfty)\too \GL_2(\A_E^{S^{\sigma},\infty}),\ g\mapstoo \sigma^{-1}(g).$$

\begin{Lem}[Galois invariance]\label{gallemma}
Let $\epsilon\colon \PGL_2(E_\infty)\to \left\{\pm 1\right\}$ be a locally constant character, $\q\in S(\pi_{E})$ a prime and $\lambda\colon E^{\ast}_\q\to \Omega$ a continuous character with values in a finite extension of $\Q_p$.
Then the equality
$$
\LI_{\lambda}(\pi_E,\q)^{\epsilon}=\LI_{\lambda^{\sigma}}(\pi_E,\q^{\sigma})^{\epsilon^{\sigma}}
$$
holds for all $\sigma\in\G$.
\end{Lem}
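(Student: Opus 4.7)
The strategy is to transport the defining equation $b_{\lambda}(\pi_E)^{\epsilon}=\LI_{\lambda}(\pi_E,\q)^{\epsilon}\cdot b_{\ord_{\q}}(\pi_E)^{\epsilon}$ through the Galois-action isomorphisms constructed just before the statement, and then read off the result.

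First I would record that, by the cyclic base change theorem of Arthur--Clozel, the base-changed representation $\pi_E$ is $\G$-invariant: there is a canonical identification $\pi_E^{\sigma}\cong \pi_E$ and hence a compatible identification $V_{\pi_E}^{S^{\sigma}}\cong V_{\pi_E}^{S}$ of its models intertwining the isomorphism $\GL_2(\A_E\sinfty)\to \GL_2(\A_E^{S^{\sigma},\infty})$, $g\mapsto \sigma^{-1}(g)$. Consequently, the $\G$-action on modular symbols constructed above preserves $\pi_E$-isotypic components and induces isomorphisms
\begin{align*}
\sigma\colon\MS_{\GL_{2,E}}^{i}(\omega_{\pi_{E}},S_0;\mathcal{R}_\pi\otimes\Omega)^{S,\epsilon}_{\pi_E}
\xlongrightarrow{\sim}\MS_{\GL_{2,E}}^{i}(\omega_{\pi_{E}},\sigma^{-1}(S_0);\mathcal{R}_\pi\otimes\Omega)^{S^{\sigma},\epsilon^{\sigma}}_{\pi_E}.
\end{align*}

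Next I would establish the key coefficient-theoretic compatibility: under the group automorphism $\PGL_2(E_{\q^{\sigma}})\to \PGL_2(E_{\q})$, $g\mapsto \sigma(g)$, together with the $\Omega$-linear isomorphism $\sigma_\ast\colon \St_{\q^{\sigma}}(\Omega)\to \St_{\q}(\Omega)$, the extension $\mathcal{E}(\lambda)$ of Section \ref{Steinberg} pulls back to $\mathcal{E}(\lambda^{\sigma})$. Indeed, a pair $(\Phi,r)$ transforming on the right by $\lambda$ at the Borel, precomposed with $\sigma$ on the argument, transforms by $\lambda\circ\sigma=\lambda^{\sigma}$; hence the induced map on $H^{1}$ sends $b_{\lambda}\mapsto b_{\lambda^{\sigma}}$. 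Specialising to $\lambda=\ord_{\q}$, one has $(\ord_{\q})^{\sigma}=\ord_{\q^{\sigma}}$, so the same identity holds for the valuation classes.

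Combining these two ingredients with the fact that cup product is natural for group homomorphisms and coefficient maps, I obtain the commutative square
\begin{equation*}
\begin{CD}
\MS_{\GL_{2,E}}^{d_{\GG}}(\omega_{\pi_{E}},S;\Omega\otimes\mathcal{R}_\pi)^{S,\epsilon}_{\pi_E} @>{\sigma}>{\sim}> \MS_{\GL_{2,E}}^{d_{\GG}}(\omega_{\pi_{E}},S^{\sigma};\Omega\otimes\mathcal{R}_\pi)^{S^{\sigma},\epsilon^{\sigma}}_{\pi_E} \\
@V{b_{\lambda}(\pi_E)^{\epsilon}}VV @VV{b_{\lambda^{\sigma}}(\pi_E)^{\epsilon^{\sigma}}}V \\
\MS_{\GL_{2,E}}^{d_{\GG}+1}(\omega_{\pi_{E}},S\setminus\{\q\};\Omega\otimes\mathcal{R}_\pi)^{S,\epsilon}_{\pi_E} @>{\sigma}>{\sim}> \MS_{\GL_{2,E}}^{d_{\GG}+1}(\omega_{\pi_{E}},S^{\sigma}\setminus\{\q^{\sigma}\};\Omega\otimes\mathcal{R}_\pi)^{S^{\sigma},\epsilon^{\sigma}}_{\pi_E}
\end{CD}
\end{equation*}
and the analogous square with $\lambda$ replaced by $\ord_{\q}$ (and $\lambda^{\sigma}$ by $\ord_{\q^{\sigma}}$). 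Applying $\sigma$ to the identity $b_{\lambda}(\pi_E)^{\epsilon}=\LI_{\lambda}(\pi_E,\q)^{\epsilon}\cdot b_{\ord_{\q}}(\pi_E)^{\epsilon}$ on the rank-one free module yields $b_{\lambda^{\sigma}}(\pi_E)^{\epsilon^{\sigma}}=\LI_{\lambda}(\pi_E,\q)^{\epsilon}\cdot b_{\ord_{\q^{\sigma}}}(\pi_E)^{\epsilon^{\sigma}}$, and comparison with the definition of $\LI_{\lambda^{\sigma}}(\pi_E,\q^{\sigma})^{\epsilon^{\sigma}}$ gives the claim.

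The main technical obstacle is the verification of the coefficient-level compatibility $\sigma_\ast(b_{\lambda})\leftrightarrow b_{\lambda^{\sigma}}$: one has to unravel the definition of the extension $\mathcal{E}(\lambda)$ and check that the Galois transport of functions on $\GL_2$ intertwines the two right-translation transformation rules. Everything else reduces to functoriality of cup product and the Galois invariance of $\pi_E$.
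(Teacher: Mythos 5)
Your proposal is correct and follows essentially the same route as the paper: the paper's proof consists precisely of the commutative square intertwining the cup-product maps via $\sigma$ on modular symbols and $\sigma_\ast$ on Steinberg cohomology, plus the Galois invariance of $\pi_E$ to identify the isotypic components. Your extra verifications — that $\mathcal{E}(\lambda)$ pulls back to $\mathcal{E}(\lambda^{\sigma})$, hence $b_{\lambda}\mapsto b_{\lambda^{\sigma}}$, and that $(\ord_{\q})^{\sigma}=\ord_{\q^{\sigma}}$ — are exactly the details the paper leaves implicit.
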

\begin{proof}
The diagram
\begin{center}
  \begin{tikzpicture}
    \path 	(0,0) 	node[name=A]{$\MS_{\GL_{2,E}}^{i}(\omega_{\pi_{E}},\left\{\q\right\};\mathcal{R}_\pi\otimes\Omega)^{\left\{\q\right\},\epsilon}_{\pi_E}$}
		(0,-0.5)   node{$\times$}
		(0,-1) 	node[name=B]{$\HH^{1}(\PGL_2(E),\St_\q(\Omega))$}
		(0,-2.2) 	node[name=C]{$\MS_{\GL_{2,E}}^{i+1}(\omega_{\pi_{E}},\emptyset;\mathcal{R}_\pi\otimes\Omega)^{\left\{\q\right\},\epsilon}_{\pi_E}$}
		(7,0) 	node[name=D]{$\MS_{\GL_{2,E}}^{i}(\omega_{\pi_{E}},\left\{\q^{\sigma}\right\};\mathcal{R}_\pi\otimes\Omega)^{\left\{\q^{\sigma}\right\},\epsilon^{\sigma}}_{\pi_E}$}
		(7,-0.5)		node{$\times$}
		(7,-1) 	node[name=E]{$\HH^{1}(\PGL_2(E),\St_{\q^{\sigma}}(\Omega))$}
		(7,-2.2) 	node[name=F]{$\MS_{\GL_{2,E}}^{i+1}(\omega_{\pi_{E}},\emptyset;\mathcal{R}_\pi\otimes\Omega)^{\left\{\q^{\sigma}\right\},\epsilon^{\sigma}}_{\pi_E}$};
    \draw[->] (B) -- (C) node[midway, left]{$\cup$};
    \draw[->] (E) -- (F) node[midway, left]{$\cup$};
    \draw[->] (A) -- (D) node[midway, above]{$\sigma$};
    \draw[->] (B) -- (E) node[midway, above]{$\sigma_\ast$};
		\draw[->] (C) -- (F) node[midway, above]{$\sigma$};
  \end{tikzpicture} 
 \end{center}
is commutative and thus, the claim follows from the Galois invariance of $\pi_E$.
\end{proof}

We can view a character of $\PGL_2(F_\infty)$ also as a character of $\PGL_2(E_\infty)$ by precomposing it with the norm.
\begin{Cor}\label{decent}
Let $\epsilon\colon \PGL_2(F_\infty)\to\left\{\pm 1\right\}$ be a locally constant character and $\q\in S_p(\pi_E)$ a prime. The automorphic period $q(\pi,\q)^{\epsilon}$ of Lemma \ref{autperiod} can be chosen to be invariant under the decomposition group $\G_\q\subseteq \G$ of $\q$.
\end{Cor}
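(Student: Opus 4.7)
The plan is to start from an arbitrary period $q_0 := q(\pi_E,\q)^{\epsilon}$ furnished by Lemma \ref{autperiod} and to replace it by its symmetrization
$$\tilde q \;:=\; \prod_{\sigma \in \G_\q} \sigma\cdot q_0 \;\in\; E_\q^{\ast} \otimes \mathcal{R}_\pi,$$
where $\G_\q \subseteq \G$ acts on the first factor through the natural action $\sigma\colon E_\q \to E_\q$. The averaging makes $\G_\q$-invariance automatic (the product is unchanged by left-multiplication by any $\tau\in \G_\q$), so the real content is to verify that $\tilde q$ is still a period and still computes the $\LI$-invariants of $\pi_E$ at $\q$.

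The key observation is that for $\sigma \in \G_\q$ one has $\q^\sigma = \q$ by definition of the decomposition group, and $\epsilon^\sigma = \epsilon$ because $\epsilon$ factors through the norm $\PGL_2(E_\infty) \to \PGL_2(F_\infty)$, which is $\G$-stable. Hence Lemma \ref{gallemma} specializes to
$$\LI_\lambda(\pi_E, \q)^{\epsilon} \;=\; \LI_{\lambda^\sigma}(\pi_E, \q)^{\epsilon}$$
for every continuous $\lambda\colon E_\q^{\ast} \to \Omega$ and every $\sigma \in \G_\q$. Unwinding the definitions and using that $\sigma$ preserves the normalized valuation on $E_\q$, the right-hand side equals $\LI_\lambda(\sigma q_0)$, so $\LI_\lambda(q_0) = \LI_\lambda(\sigma q_0)$ for all $\lambda$ and all $\sigma \in \G_\q$.

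It then only remains to run the averaging. Rewriting the last identity as $(\lambda \otimes \id)(\sigma q_0) = (\ord \otimes \id)(q_0)\cdot \LI_\lambda(q_0)$, summing over $\sigma \in \G_\q$, and dividing by $(\ord \otimes \id)(\tilde q) = |\G_\q|\cdot (\ord \otimes \id)(q_0)$ (which, being nonzero, confirms en passant that $\tilde q$ is again a period) yields $\LI_\lambda(\tilde q) = \LI_\lambda(q_0) = \LI_\lambda(\pi_E, \q)^{\epsilon}$ as required. There is no genuine obstacle here: once Lemma \ref{gallemma} is in hand the corollary is a purely formal averaging argument, and the only subtlety is keeping the conventions $\lambda^\sigma = \lambda \circ \sigma$ and the $\G_\q$-action on $E_\q^{\ast} \otimes \mathcal{R}_\pi$ consistent so that the identification $\LI_{\lambda^\sigma}(q_0) = \LI_\lambda(\sigma q_0)$ really holds.
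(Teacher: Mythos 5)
Your proof is correct, and it hinges on exactly the same key input as the paper: Lemma \ref{gallemma} applied with $\q^{\sigma}=\q$ and $\epsilon^{\sigma}=\epsilon$ for $\sigma\in\G_\q$, combined with Lemma \ref{autperiod}, to get $\LI_{\lambda}(q_0)=\LI_{\lambda^{\sigma}}(q_0)=\LI_{\lambda}(\sigma q_0)$ for all $\lambda$ and all $\sigma\in\G_\q$ (your bookkeeping check that $\LI_{\lambda^{\sigma}}(q_0)=\LI_{\lambda}(\sigma q_0)$, using that $\sigma$ preserves the normalized valuation on $E_\q$, is exactly right and is the same point the paper uses implicitly). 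Where you diverge is the finishing move. The paper invokes Lemma \ref{changeofdir}~(b): equality of all $\LI$-invariants gives $q_0^{n}=(\sigma q_0)^{m}$ for some nonzero $m,n\in\mathcal{R}_\pi$, comparing valuations forces $m=n$, and hence the power $q_0^{m}$ is $\sigma$-invariant (and still an automorphic period since $\LI_\lambda(q_0^m)=\LI_\lambda(q_0)$). You instead symmetrize, replacing $q_0$ by $\tilde q=\prod_{\sigma\in\G_\q}\sigma q_0$, and verify by a direct computation with the definition of $\LI$ that $\tilde q$ is again a period (its $\ord\otimes\id$-image is $|\G_\q|\cdot(\ord\otimes\id)(q_0)\neq 0$, as $\mathcal{R}_\pi$ has characteristic zero) with the same $\LI$-invariants, which is all Lemma \ref{autperiod} asks of it. Your route has the mild advantage of handling the whole decomposition group in one stroke, whereas the paper's argument as written treats a single $\sigma$ and tacitly needs a common exponent for all of $\G_\q$; the paper's route avoids the averaging computation by outsourcing it to Lemma \ref{changeofdir}~(b). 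Both are complete and yield an invariant choice of $q(\pi_E,\q)^{\epsilon}$.
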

\begin{proof}
For $\sigma\in \G_\q$ the previous lemma implies that
$$\LI_{\lambda}(q(\pi,\q)^{\epsilon})=\LI_{\lambda}(\sigma(q(\pi,\q)^{\epsilon}))$$
holds for every continuous character $\lambda\colon E^{\ast}_\q\to \Omega$ with values in a finite extension of $\Q_p$ and any choice of automorphic period $q(\pi,\q)^{\epsilon}$.
By Lemma \ref{changeofdir} \eqref{change2} there exist non-zero $m,n\in \mathcal{R}_\pi$ such that $(q(\pi,\q)^{\epsilon})^{n}=(\sigma(q(\pi,\q)^{\epsilon}))^{m}$.
Considering the $p$-adic valuations of both periods we see that $m=n$ and therefore, $(q(\pi,\q)^{\epsilon})^{m}$ is invariant under $\sigma$.
\end{proof}
\subsection{Artin formalism - the cyclotomic case}\label{Artin}
The $L$-function of $\pi_E$ factors as a product of twists of the $L$-function of $\pi$:
\begin{align}\label{product}
L(\pi_E,s)=\prod_{\chi\colon\G\to\C^{\ast}} L(\pi,\chi,s)
\end{align}
Let us assume that $\pi$ is $p$-ordinary, i.e.~$\pi_\p$ is either an ordinary, unramified principal series representation or an unramified twist of the Steinberg representation for all $\p\in S_p$.
Then, Deppe constructs in \cite{De} a multi-variable $p$-adic L-function $L_p(\pi,\underline{s})$. It fulfils the interpolation property
\begin{align}\label{interpolation}
L_p(\pi,\chi)=\tau(\chi)\prod_{\p \in S_p}e(\pi_\p,\chi_\p)L(\pi,1/2)
\end{align}
for all idele class characters of finite order which are unramified outside all places above $p$ and $\infty$. Here $\tau(\chi)$ is the Gauss sum of $\tau$ and $e(\pi_\p,\chi_\p)$ is an explicit modified Euler factor.
The modified Euler factor $e(\pi_\p,\chi_\p)$ vanishes if and only if $\pi_\p\otimes\chi_\p$ is of the following type:
either it is isomorphic to the Steinberg representation or to a parabolic induction from a character of the form
$$(F_\p^\ast)^{2}\too \C^{\ast},\ (t_1,t_2)\mapstoo \chi_2(t_2).$$
We call these exceptional principal series.
Note that if the central character of $\pi$ is trivial, no exceptional principal series can occur.

We write $L_p^{\cyc}(\pi,s)$ for the restriction of the $p$-adic $L$-function to the cyclotomic $\Z_\p$-extension.
If $E/F$ is unramified at all places $\p\in S_p$, the representation $\pi_E$ is also $p$-ordinary and thus, we can consider the $p$-adic $L$-function associated to $\pi_E$.
The product formula \eqref{product} together with the interpolation property \eqref{interpolation} yields
\begin{align}\label{pproduct}
L_p^{\cyc}(\pi_E,s)=\prod_{\chi\colon\G\to\C^{\ast}} L_p^{\cyc}(\pi\otimes\chi,s).
\end{align}

\begin{Lem}\label{comparison}
Assume that
\begin{itemize}
\item $\pi$ is $p$-ordinary,
\item $E/F$ is unramified at all primes $\p\in S_p$,
\item $S_{p}(\pi)=\left\{\tilde{\p}\right\}$ and every $\q \in S_p(\pi_{E})$ lies above $\tilde{\p}$
\item no local component of $\pi_{E}$ at a prime above $p$ is an exceptional principal series and
\item $L(\pi_E,1/2)\neq 0.$
\end{itemize}
Then we have the following equality:
\begin{align*}
\prod_{\substack{\chi\colon\G\to\C^{\ast}\\ \chi_{\tilde{\p}}=1}} f_{\tilde{\p}}\cdot \LI^{\cyc}(\pi\otimes\chi,\tilde{\p})
= \prod_{\q\in S_p(\pi_E)}  \LI^{\cyc}(\pi_E,\q),
\end{align*}
where $f_{\tilde{\p}}$ is the inertia degree of $\tilde{\p}$ in $E/F$.
\end{Lem}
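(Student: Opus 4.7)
The plan is to apply Spie\ss' exceptional zero formula (cf.~\cite{Sp}, and its generalisations in \cite{De}, \cite{Felix}) to both sides of the product identity \eqref{pproduct} and compare leading Taylor coefficients at $s=1/2$. The hypothesis $L(\pi_E,1/2)\neq 0$ together with the complex Artin formalism $L(\pi_E,1/2)=\prod_\chi L(\pi\otimes\chi,1/2)$ ensures that all global $L$-values are nonzero and cancel between the two sides.

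First I would match orders of vanishing. Since $E/F$ is unramified at $\tilde{\p}$, the prime $\tilde{\p}$ has exactly $[E:F]/f_{\tilde{\p}}$ primes above it in $E$, and the hypothesis forces $S_p(\pi_E)$ to consist of precisely these. Hence $L_p^{\cyc}(\pi_E,s)$ vanishes to order $[E:F]/f_{\tilde{\p}}$ at $s=1/2$. For the right-hand side, $(\pi\otimes\chi)_{\tilde{\p}}$ is Steinberg if and only if $\chi_{\tilde{\p}}=1$, and $(\pi\otimes\chi)_\p$ is never Steinberg for $\p\in S_p\setminus\{\tilde{\p}\}$; thus $L_p^{\cyc}(\pi\otimes\chi,s)$ has a simple zero precisely when $\chi_{\tilde{\p}}=1$. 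The characters with $\chi_{\tilde{\p}}=1$ are those factoring through $\G/\G_{\tilde{\p}}$, of which there are $[E:F]/f_{\tilde{\p}}$, matching the order on the left.

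Substituting the exceptional zero formula on each side and applying the Leibniz rule, the comparison of leading coefficients reduces, after cancelling complex $L$-values, to an identity relating $\LI$-invariants and modified Euler factors at the primes of $S_p$. At each prime $\p\in S_p\setminus\{\tilde{\p}\}$, the factor $\prod_{\q\mid\p} e(\pi_{E,\q},1)$ on the left matches $\prod_\chi e((\pi\otimes\chi)_\p,1)$ on the right by a local Artin formalism for modified Euler factors of ordinary unramified principal series: the Satake parameters of $\pi_{E,\q}$ are the $f_\p$-th powers of those of $\pi_\p$, so applying the identity $\prod_{\zeta^{f_\p}=1}(1-w\zeta)=1-w^{f_\p}$ at the ordinary root, combined with the fact that every character of the decomposition group extends in $[E:F]/f_\p$ ways to a character of $\G$, produces the match.

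The remaining step is the local computation at $\tilde{\p}$, which is where the factor $f_{\tilde{\p}}$ enters. The left-hand side has no Euler factor at $\tilde{\p}$ (all primes above $\tilde{\p}$ lie in $S_p(\pi_E)$), while the right-hand side carries $\prod_{\chi:\chi_{\tilde{\p}}\neq 1}e((\pi\otimes\chi)_{\tilde{\p}},1)$. For $\pi_{\tilde{\p}}$ Steinberg (ordinary root normalised to $1$) and an unramified twist $\chi_{\tilde{\p}}$ with $\chi_{\tilde{\p}}(\varpi_{\tilde{\p}})=\zeta$, the modified Euler factor is, up to a unit that balances against the $\p\neq\tilde{\p}$ Artin formalism, equal to $1-\zeta$. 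Since $\chi\mapsto\chi(\mathrm{Frob}_{\tilde{\p}})$ sweeps every nontrivial $f_{\tilde{\p}}$-th root of unity with multiplicity $[E:F]/f_{\tilde{\p}}$, the elementary identity $\prod_{\zeta^{f_{\tilde{\p}}}=1,\zeta\neq 1}(1-\zeta) = f_{\tilde{\p}}$ (obtained by evaluating $(T^{f_{\tilde{\p}}}-1)/(T-1)$ at $T=1$) yields $\prod_{\chi:\chi_{\tilde{\p}}\neq 1}e((\pi\otimes\chi)_{\tilde{\p}},1) = f_{\tilde{\p}}^{[E:F]/f_{\tilde{\p}}} = \prod_{\chi:\chi_{\tilde{\p}}=1}f_{\tilde{\p}}$, which is exactly the extra factor in the statement. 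The main obstacle is not the algebra itself but extracting the precise form of the exceptional zero formula and the modified Euler factors from \cite{Sp}, \cite{De}, and \cite{Felix} so that all constants balance unambiguously.
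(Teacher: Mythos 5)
Your proposal is correct and follows essentially the same route as the paper: take the $r$-th derivative of the factorization \eqref{pproduct}, insert the Spie\ss--Bergunde exceptional zero formula and the interpolation property, cancel the nonzero complex $L$-values and the modified Euler factors away from $\tilde{\p}$, and evaluate $\prod_{\chi_{\tilde{\p}}\neq 1}\bigl(1-\chi_{\tilde{\p}}(\varpi_{\tilde{\p}})\bigr)=f_{\tilde{\p}}^{\,g_{\tilde{\p}}}$ via the cyclotomic identity. The only difference is cosmetic: you spell out the local Artin formalism for the Euler factors at $\p\in S_p\setminus\{\tilde{\p}\}$ and the counting of characters trivial on the decomposition group, steps the paper leaves implicit.
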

\begin{proof}
The vanishing of the modified Euler factor for the Steinberg representation implies that
$$\ord_{s=0}L_p^{\cyc}(\pi\otimes\chi,s)\geq 1$$
if $\chi_{\tilde{\p}}$ is trivial.
Hence, by applying the $r=|S_p(\pi_{E})|$-th derivative to \eqref{pproduct} we get
\begin{align*}
\frac{1}{r!}\left.\frac{d^{r}}{ds^{r}}L_p^{\cyc}(\pi_E,s)\right|_{s=0}=
\prod_{\substack{\chi\colon\G\to\C^{\ast}\\ \chi_{\tilde{\p}}=1}} \left.\frac{d}{ds}L_p^{\cyc}(\pi\otimes\chi,s)\right|_{s=0}
\cdot \prod_{\substack{\chi\colon\G\to\C^{\ast}\\ \chi_{\tilde{\p}}\neq 1}} L_p^{\cyc}(\pi\otimes\chi,0).
\end{align*}
By the main theorem of \cite{Sp}, or rather its generalization to arbitrary number fields by Bergunde (see \cite{Felix}, Theorem 4.17), we get
$$\left.\frac{d}{ds}L_p^{\cyc}(\pi\otimes\chi,s)\right|_{s=0}=\LI^{\cyc}(\pi\otimes\chi,\tilde{\p})\hspace{0.8em}\cdot \prod_{\p \in S_p-\left\{\tilde{\p}\right\}}\hspace{-0.8em} e(\pi_{\p},\chi_{\p}) L(\pi,\chi,1/2)$$
if $\chi_{\tilde{\p}}$ is trivial, and
$$\frac{1}{r!}\left.\frac{d^{r}}{ds^{r}}L_p^{\cyc}(\pi_E,s)\right|_{s=0}
=\prod_{\q\in S_p(\pi_E)}  \hspace{-0.8em}\LI^{\cyc}(\pi_E,\q) \hspace{0.7em} \cdot \hspace{-0.8em} \prod_{\q\in S_p(E)\backslash S_p(\pi_E)} \hspace{-2em}e(\pi_{E,\q},1) \cdot L(\pi_E,1/2).$$
Note, that in \cite{Sp} and \cite{Felix} the representation $\pi$ is assumed to have trivial central character.
But the proof carries over to our more general setup.

The above formulas together with \eqref{product}, \eqref{interpolation} and our non-vanishing assumption on the special $L$-value imply that
$$\prod_{\q\in S_p(\pi_E)}  \hspace{-0.8em}\LI^{\cyc}(\pi_E,\q) \hspace{0.2em}
=\prod_{\substack{\chi\colon\G\to\C^{\ast}\\ \chi_{\tilde{\p}}=1}}\hspace{-0.8em}\LI^{\cyc}(\pi\otimes\chi,\tilde{\p}) \hspace{0.7em}\cdot
\prod_{\substack{\chi\colon\G\to\C^{\ast}\\ \chi_{\tilde{\p}}\neq 1}} \hspace{-1em}e(\pi_{\tilde{\p}},\chi_{\tilde{\p}}).
$$
We conclude by using $e(\pi_{\tilde{\p}},\chi_{\tilde{\p}})=1-\chi_{\tilde{\p}}(\varpi_{\tilde{\p}})$ and the formula
$\prod_{i=1}^{n-1}1-\zeta_{n}^{i}=n$
for a primitive $n$-th root of unity $\zeta_n$.
\end{proof}

\begin{Rem}
By slightly extending Deppe's construction, i.e.~allowing ramified twists of ordinary representations at places in $S_p$, one may allow $E/F$ to be ramified at primes above $p$.
One can also relax the ordinarity condition by applying recent work of Barrera, Dimitrov and Jorza (see \cite{BDJ}).

If $F=\Q$ and $E$ is an imaginary quadratic field in which the prime $p$ is inert, Barrera and Williams have proven the above lemma for modular forms of arbitrary weight (see \cite{BaWi}, Proposition 10.02).
\end{Rem}

In order to be able to twist to a situation, in which we can use the above lemma, we need to assume the following simultaneous non-vanishing hypothesis. 
\begin{Hyp}[SNV]
Let $\pi^{\prime}$ be a cuspidal automorphic representation of $\GL_2(F)$, which is cohomological with respect to the trivial coefficient system.
For every pair of locally constant characters $\eta\colon \A_F^{\ast}/F^{\ast}\to \left\{\pm 1 \right\}$ and $\epsilon\colon F_{\infty}^{\ast}\to \left\{\pm 1\right\}$ there exists a locally constant character $\chi^{\epsilon}\colon \A_F^{\ast}/F^{\ast}\to \C^{\ast}$ such that
\begin{itemize}
\item $\chi^{\epsilon}_{\infty}=\epsilon$,
\item $\chi^{\epsilon}_\p$ is trivial for all $\p\in S_p$ and
\item $L(\pi^{\prime},\chi^{\epsilon},1/2)\cdot L(\pi^{\prime},\eta\chi^{\epsilon},1/2)\neq 0$.
\end{itemize} 
\end{Hyp}
\begin{Rem}
In the case $F=\Q$ Akbary proved in \cite{Akbary} that there always exist infinitely many Dirichlet characters $\widetilde{\chi}$ such that the third condition holds. Unfortunately, he can not put restrictions on the local behaviour of $\widetilde{\chi}$.
On the other hand by a classic result of Rohrlich (cf.~\cite{Rohrlich}) Hypothesis (SNV) holds if $\eta$ is the trivial character.
%Therefore, it seems likely that Hypothesis (SNV) holds.
\end{Rem}

The following result partially answers conjecture 5.4 of \cite{Sp}.
\begin{Thm}[Invariance of sign]\label{invariance}
Assume that Hypothesis (SNV) holds and that $\pi$ is $p$-ordinary.
Then for all $\tilde{\p}\in S_p(\pi)$ the automorphic $\LI$-invariant $\LI^{\cyc}(\pi,\tilde{\p})^{\epsilon}$ does not depend on the character $\epsilon$. 
\end{Thm}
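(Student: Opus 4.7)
The plan is to reduce sign-independence over the general number field $F$ to the totally imaginary case (where $\LI^{\cyc}$ has no sign by definition) via a quadratic base change combined with Lemma~\ref{comparison}. Fix two signs $\epsilon, \epsilon'$ whose $\LI$-invariants are to be compared. By class field theory, choose a quadratic Hecke character $\eta\colon \A_F^{\ast}/F^{\ast} \to \{\pm 1\}$ cutting out a totally imaginary quadratic extension $E/F$ (forcing $\eta_\infty = -1$ at each real place of $F$), unramified at every place in $S_p$, and such that $\tilde{\p}$ splits in $E$ (so $\eta_{\tilde{\p}} = 1$); the constraints at real places and at $\tilde{\p}$ are mutually independent. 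Hypothesis~(SNV) applied to the pair $(\eta, \epsilon)$ produces a Hecke character $\chi$ with $\chi_\infty = \epsilon$, $\chi_\p = 1$ for all $\p \in S_p$, and $L(\pi, \chi, 1/2) L(\pi, \eta\chi, 1/2) \neq 0$. Setting $\pi' = \pi \otimes \chi$, the factorization $L(\pi'_E, s) = L(\pi, \chi, s) L(\pi, \eta\chi, s)$ gives $L(\pi'_E, 1/2) \neq 0$, and $\pi'$ remains $p$-ordinary and Steinberg at $\tilde{\p}$, so Lemma~\ref{comparison} applies to $\pi'$ over $E/F$.

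Since $\tilde{\p}$ splits in $E$, $f_{\tilde{\p}} = 1$ and both characters $1, \eta$ of $\G = \Gal(E/F)$ satisfy $\psi_{\tilde{\p}} = 1$. A sign-refined variant of Lemma~\ref{comparison}, obtained by re-running its proof while keeping track of the sign $\epsilon'$ of the modular symbol used in Deppe's construction of $L_p^{\cyc}$ (the right-hand side being intrinsically sign-free, as $E$ is totally imaginary), yields
\[
\LI^{\cyc}(\pi', \tilde{\p})^{\epsilon'} \cdot \LI^{\cyc}(\pi' \otimes \eta, \tilde{\p})^{\epsilon' \eta_\infty} = \prod_{\q \in S_p(\pi'_E)} \LI^{\cyc}(\pi'_E, \q).
\]
Applying Proposition~\ref{twist} and using $\chi_\infty = \epsilon$, each factor on the left transforms into $\LI^{\cyc}(\pi, \tilde{\p})^{\epsilon \epsilon'}$, so the relation becomes
\[
\bigl[\LI^{\cyc}(\pi, \tilde{\p})^{\epsilon \epsilon'}\bigr]^{2} = \prod_{\q \in S_p(\pi'_E)} \LI^{\cyc}(\pi'_E, \q).
\]
Since the right-hand side does not involve $\epsilon'$, letting $\epsilon'$ vary shows that $[\LI^{\cyc}(\pi, \tilde{\p})^{\delta}]^{2}$ is independent of $\delta$, hence $\LI^{\cyc}(\pi, \tilde{\p})^{\epsilon} = \pm\, \LI^{\cyc}(\pi, \tilde{\p})^{\epsilon'}$.

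The residual $\pm 1$-ambiguity is the main obstacle. To remove it I would either (i) iterate the construction with a second independent totally imaginary quadratic extension $E''/F$ (produced by another application of (SNV)), yielding a second quadratic relation that combined with the first, together with the Galois invariance of Lemma~\ref{gallemma} and the rigidity statement Lemma~\ref{changeofdir}\eqref{change2}, forces the sign to be trivial; or (ii) directly refine the Spie\ss{}--Bergunde exceptional zero formula to keep the modular-symbol sign explicit from the outset, so that the product formula yields not merely an identity of squares but an identity of $\LI$-invariants proper. The sign bookkeeping through Deppe's construction and through the comparison lemma is routine but delicate; the genuine difficulty lies in bridging the squaring step to obtain the exact equality claimed by the theorem, as the parallel Jacquet--Langlands statement Theorem~\ref{haupt} is explicitly only up to sign.
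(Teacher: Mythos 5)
Your overall strategy --- reduce to the totally imaginary case via a quadratic base change, using (SNV) to force $L(\pi'_E,1/2)\neq 0$ so that Lemma \ref{comparison} applies, and Proposition \ref{twist} to push the sign into the twisting character --- is the paper's, but your choice of auxiliary extension creates two genuine gaps. First, you take $E/F$ with $\tilde{\p}$ \emph{split}, so both characters of $\G$ are trivial at $\tilde{\p}$ and Lemma \ref{comparison} produces a \emph{product} of two $\LI$-invariants on the left; to exploit this you invoke a ``sign-refined variant'' of Lemma \ref{comparison} in which the sign $\epsilon'$ of the modular symbol is a free parameter. No such refinement is available: the cyclotomic $p$-adic $L$-functions of $\pi'$, $\pi'\otimes\eta$ and $\pi'_E$ interpolate characters that are trivial at infinity, so the Spie\ss--Bergunde derivative formula used in the proof of Lemma \ref{comparison} only ever involves the trivial-sign eigencomponent --- there is no $\epsilon'$ to keep track of. Second, even granting that identity, you only obtain that $\bigl[\LI^{\cyc}(\pi,\tilde{\p})^{\epsilon\epsilon'}\bigr]^{2}$ is independent of $\epsilon'$, i.e.\ the theorem up to a sign, and you concede the $\pm 1$ is unresolved; your proposed fixes (i) and (ii) are not carried out, and (i) in particular cannot work as described, since every extension chosen the same way returns the same quadratic relation and neither Lemma \ref{gallemma} nor Lemma \ref{changeofdir} breaks the sign.

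The paper avoids both problems by one different choice: the totally imaginary quadratic extension $E'$ is taken \emph{inert} at $\tilde{\p}$ (and split at the other primes of $S_p$). Then $\eta_{\tilde{\p}}\neq 1$, so in Lemma \ref{comparison} only the trivial character of $\G$ contributes, $f_{\tilde{\p}}=2$, and one gets the \emph{linear} relation $2\cdot\LI^{\cyc}(\pi\otimes\chi,\tilde{\p})=\LI^{\cyc}(\pi_{E'}\otimes\chi,\q)$, with trivial sign on the left and no sign at all on the right. Running this once with the character $\chi$ furnished by (SNV) for the trivial sign and once with $\chi^{\epsilon}$, and comparing over $E'$ by twist-invariance (where no sign issue exists), yields $\LI^{\cyc}(\pi,\tilde{\p})=\LI^{\cyc}(\pi,\tilde{\p})^{\epsilon}$ on the nose --- no squaring, no residual sign. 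If you replace your split extension by an inert one, your argument collapses to the paper's proof.
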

\begin{proof}
By twisting with an appropriate character, which is unramified at $p$, we may assume that $S_p(\pi)=\left\{\tilde{\p}\right\}$ and that no local component of $\pi$ at a prime above $p$ is an exceptional principal series.
Let $E^{\prime}$ be a totally imaginary quadratic extension of $F$, which is inert at $\tilde{\p}$ and split at all other primes in $S_p$.
Let $\eta$ be the associated quadratic idele class character.
We choose a character $\chi$ (respectively $\chi^{\epsilon}$) as in Hypothesis (SNV) with respect to the trivial character (respectively $\epsilon$).
We may view $\chi$ (respectively $\chi^{\epsilon}$) as an idele class character on $\A_{E^{\prime}}^{\ast}$ by composing it with the norm map to $\A_F^{\ast}$.
We get the chain of equalities
\begin{align*}
\LI^{\cyc}(\pi,\tilde{\p})
&\stackrel{\ref{twist}}{=} \LI^{\cyc}(\pi\otimes\chi,\tilde{\p})\\
&\stackrel{\ref{comparison}}{=}\frac{1}{2}\cdot\LI^{\cyc}(\pi_{E^{\prime}}\otimes\chi,\tilde{\p})\\
&\stackrel{\ref{twist}}{=} \frac{1}{2}\cdot\LI^{\cyc}(\pi_{E^{\prime}}\otimes\chi^{\epsilon},\tilde{\p})\\
&\stackrel{\ref{comparison}}{=}  \LI^{\cyc}(\pi\otimes\chi^{\epsilon},\tilde{\p})\\
&\stackrel{\ref{twist}}{=}  \LI^{\cyc}(\pi,\tilde{\p})^{\epsilon},
\end{align*}
which proves the claim.
\end{proof}

\begin{Rem}
In the case $F=\Q$ the independence of $\LI$-invariants of the sign at infinity was proven by Breuil (see Corollary 4.5.4 of \cite{Br2}) using completed cohomology and by Bertolini, Darmon and Iovita (see Theorem 6.8 of \cite{BDI}) using Hida families.
\end{Rem}

The above theorem together with Lemma \ref{gallemma} and Proposition \ref{twist} immediately gives the following strengthening of Lemma \ref{comparison}.
\begin{Cor}[Invariance under base change]\label{bcc}
Assume that Hypothesis (SNV) holds or that $F$ is totally imaginary.
Under the assumptions of Lemma \ref{comparison} we have
$$(f_{\tilde{\p}}\cdot \LI^{\cyc}(\pi,\tilde{\p}))^{g_{\tilde{\p}}}
= \LI^{\cyc}(\pi_E,\q)^{g_{\tilde{\p}}},$$
where $\q$ is a prime of $E$ lying above $\tilde{\p}$ and $g_{\tilde{\p}}$ is the number of such primes.
\end{Cor}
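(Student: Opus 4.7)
The plan is to refine the product identity of Lemma \ref{comparison} by showing that both sides are actually $g_{\tilde{\p}}$-th powers of a single factor, so that the equality of products forces the equality of $g_{\tilde{\p}}$-th powers. All the tools are in hand: Galois invariance (Lemma \ref{gallemma}) for the right-hand side, twist invariance (Proposition \ref{twist}) for the left-hand side, and Theorem \ref{invariance} (or the totally imaginary assumption) to kill the dependence on the sign at infinity.

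First I would analyse the right-hand product $\prod_{\q\in S_p(\pi_E)}\LI^{\cyc}(\pi_E,\q)$. By the hypotheses of Lemma \ref{comparison} the set $S_p(\pi_E)$ is exactly the set of primes of $E$ above $\tilde{\p}$, and these form a single $\G$-orbit of cardinality $g_{\tilde{\p}}$. The cyclotomic character $\lambda_{\q}=\log_p\circ\No_{E_{\q}/\Q_p}$ is compatible with the $\G$-action in the sense that $\lambda_{\q}^{\sigma}=\lambda_{\q^{\sigma}}$, since the isomorphism $\sigma\colon E_{\q^{\sigma}}\to E_{\q}$ preserves the norm to $\Q_p$. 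Hence Lemma \ref{gallemma} applied with the trivial sign $\epsilon$, together with sign-invariance (either trivially in the totally imaginary case or via Theorem \ref{invariance} under (SNV)), yields $\LI^{\cyc}(\pi_E,\q)=\LI^{\cyc}(\pi_E,\q^{\sigma})$ for all $\sigma\in\G$. The right-hand product therefore collapses to $\LI^{\cyc}(\pi_E,\q)^{g_{\tilde{\p}}}$.

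Next, for the left-hand product, I would identify the characters $\chi\colon\G\to\C^{\ast}$ with $\chi_{\tilde{\p}}=1$ with characters of the quotient $\G/\G_{\tilde{\p}}$ via local class field theory; there are exactly $g_{\tilde{\p}}$ of them. For each such $\chi$, Proposition \ref{twist} gives $\LI^{\cyc}(\pi\otimes\chi,\tilde{\p})^{\chi_{\infty}}=\LI^{\cyc}(\pi,\tilde{\p})$, and sign-invariance (Theorem \ref{invariance} under (SNV), or automatic if $F$ is totally imaginary) removes the sign exponent, giving $\LI^{\cyc}(\pi\otimes\chi,\tilde{\p})=\LI^{\cyc}(\pi,\tilde{\p})$. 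The left-hand side therefore collapses to $(f_{\tilde{\p}}\cdot\LI^{\cyc}(\pi,\tilde{\p}))^{g_{\tilde{\p}}}$, and Lemma \ref{comparison} yields the claimed equality.

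The only point that really needs care is the Galois-equivariance $\lambda_{\q}^{\sigma}=\lambda_{\q^{\sigma}}$ of the cyclotomic character, and the character-counting $\#\{\chi:\chi_{\tilde{\p}}=1\}=g_{\tilde{\p}}$; once these are in place, the proof is a direct combination of the three cited results, which is why the author rightly calls the corollary \emph{immediate}.
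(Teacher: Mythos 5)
Your proposal is correct and matches the paper's proof, which is exactly the "immediate" combination of Lemma \ref{comparison} with Proposition \ref{twist} and Theorem \ref{invariance} (or triviality of the sign when $F$ is totally imaginary) to collapse the left-hand product over the $g_{\tilde{\p}}$ characters with $\chi_{\tilde{\p}}=1$, and Lemma \ref{gallemma} (with the Galois-equivariance $\lambda^{\sigma}=\log_p\circ\No_{E_{\q^{\sigma}}/\Q_p}$ you correctly check) to collapse the right-hand product over the $\G$-orbit of primes above $\tilde{\p}$. The only cosmetic remark is that since the sign character in the statement is trivial, $\epsilon^{\sigma}=\epsilon$ automatically, so sign-invariance is not actually needed for the right-hand collapse, only for removing $\chi_{\infty}$ on the left.
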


\begin{Rem}
In certain situations one can relax some of the assumptions in the corollary above.
For example, in case $E/F$ is quadratic it is enough to assume that Hypothesis (SNV) holds, that $\pi$ is $p$-ordinary and that $E/F$ is unramified at all primes in $S_p$. 
\end{Rem}

\begin{Con}
Let $\tilde{\p}\in S_p(\pi)$ be a prime.
Then
$$e_{\tilde{\p}}f_{\tilde{\p}}\cdot \LI^{\cyc}(\pi,\tilde{\p})
= \LI^{\cyc}(\pi_E,\q)$$
holds for all primes $\q$ of $E$ above $\tilde{\p}$, where $e_{\tilde{\p}}$ denotes the ramification index of $\tilde{\p}$ in $E$.
\end{Con}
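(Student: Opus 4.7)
My plan is to prove the conjecture by reducing to cyclic extensions of prime order via a tower and then handling the three possible local behaviours at $\tilde{\p}$ separately. Writing $E/F$ as a tower $F = F_0 \subset F_1 \subset \ldots \subset F_n = E$ of cyclic prime-degree extensions, and using that both $e_{\tilde{\p}}$ and $f_{\tilde{\p}}$ are multiplicative in towers, it suffices to establish the formula at each step. In a cyclic prime-degree extension, the prime $\tilde{\p}$ is either inert, totally ramified, or totally split; I treat these in turn.

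For the inert cyclic case, Corollary \ref{bcc} already yields $f_{\tilde{\p}} \LI^{\cyc}(\pi, \tilde{\p}) = \LI^{\cyc}(\pi_E, \q)$ directly, since $g_{\tilde{\p}} = 1$ and $e_{\tilde{\p}} = 1$, assuming Hypothesis (SNV). For the totally ramified cyclic case, I would extend Lemma \ref{comparison} to allow ramification above $p$. This requires the nearly ordinary extension of Deppe's $p$-adic L-function mentioned in the Remark following Lemma \ref{comparison}, together with the Molina--Gehrmann generalisation of the Spieß--Bergunde leading term formula to ramified twists. A careful analysis of the modified Euler factor $e(\pi_\p, \chi_\p)$ at ramified characters --- and in particular its order of vanishing --- should produce the correction factor $e_{\tilde{\p}}$, after which the Artin-formalism argument of Lemma \ref{comparison} carries over verbatim.

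The totally split cyclic case is where the main obstacle sits. Here $e_{\tilde{\p}} = f_{\tilde{\p}} = 1$ and $g_{\tilde{\p}} = [E:F]$, and Lemma \ref{comparison} combined with Galois invariance (Lemma \ref{gallemma}) and invariance under twists (Proposition \ref{twist}) yields only $\LI^{\cyc}(\pi, \tilde{\p})^{g_{\tilde{\p}}} = \LI^{\cyc}(\pi_E, \q)^{g_{\tilde{\p}}}$, whereas the conjecture asks for the unadorned equality. Note however that in the totally split case $F_{\tilde{\p}} = E_\q$, so both periods $q(\pi, \tilde{\p})$ and $q(\pi_E, \q)$ of Lemma \ref{autperiod} live in the same group $F_{\tilde{\p}}^\ast \otimes \mathcal{R}_\pi$. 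My strategy is to upgrade the cyclotomic identity to a comparison of L-invariants $\LI_\lambda(\pi, \tilde{\p})^{g_{\tilde{\p}}} = \LI_\lambda(\pi_E, \q)^{g_{\tilde{\p}}}$ for every continuous character $\lambda\colon F_{\tilde{\p}}^\ast \to \Omega$. Granting this, Lemma \ref{changeofdir}(\ref{change2}) translates the identity into a relation $q(\pi_E, \q)^{g_{\tilde{\p}} n} = q(\pi, \tilde{\p})^{g_{\tilde{\p}} m}$ in $F_{\tilde{\p}}^\ast \otimes \mathcal{R}_\pi$; applying $\ord$ and $\lambda$ to both sides and cancelling the common factor of $g_{\tilde{\p}}$ yields the individual equality $\LI_\lambda(\pi, \tilde{\p}) = \LI_\lambda(\pi_E, \q)$.

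The main obstacle is therefore the production of such a non-cyclotomic Artin-formalism identity: one needs the Spieß--Bergunde leading term formula not only for the cyclotomic restriction of $L_p(\pi, \underline{s})$ but in arbitrary directions of the multi-variable deformation space, or equivalently leading term formulas for $p$-adic L-functions of sufficiently many non-cyclotomic twists. This goes beyond what is available in \cite{Sp} and \cite{Felix}. An alternative avenue, avoiding non-cyclotomic $p$-adic L-functions altogether, would be a direct period comparison realising $\pi_E$ geometrically from $\pi$ (for instance via a Shimura variety for $\mathrm{Res}_{E/F} \GL_2$) so that one could identify $q(\pi_E, \q)$ and $q(\pi, \tilde{\p})$ on the nose up to a nonzero power; but this seemingly requires substantially new input. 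The descent from $g_{\tilde{\p}}$-th powers to individual L-invariants in the totally split case is the hardest step, and the reason the statement is formulated as a conjecture rather than a theorem.
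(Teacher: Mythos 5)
The statement you were asked to prove is stated in the paper as a \emph{Conjecture}: the paper itself offers no proof, and your write-up, as you yourself acknowledge in the final paragraph, does not supply one either. Concretely, the gaps are these. First, even your ``easy'' inert case is not a proof of the conjecture as stated: Corollary \ref{bcc} is conditional on Hypothesis (SNV) (or $F$ totally imaginary) and on all the hypotheses of Lemma \ref{comparison} ($p$-ordinarity, $E/F$ unramified above $p$, $S_p(\pi)=\{\tilde{\p}\}$ with all of $S_p(\pi_E)$ above $\tilde{\p}$, and $L(\pi_E,1/2)\neq 0$), whereas the conjecture carries no such hypotheses; the twisting tricks of Section \ref{Artin} remove some but not all of these. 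Second, the totally ramified case rests on ingredients that are only announced in the Remark after Lemma \ref{comparison} (the nearly ordinary extension of Deppe's construction and the non-critical-slope generalization of the Spie\ss--Bergunde formula), plus an unverified claim about how the modified Euler factors produce the factor $e_{\tilde{\p}}$; none of this is carried out. Third, and most seriously, in the totally split case the only available input is the identity of $g_{\tilde{\p}}$-th powers from Corollary \ref{bcc}, and your proposed upgrade to $\LI_\lambda(\pi,\tilde{\p})^{g_{\tilde{\p}}}=\LI_\lambda(\pi_E,\q)^{g_{\tilde{\p}}}$ for \emph{all} $\lambda$ requires leading-term formulas for $L_p(\pi_E,\underline{s})$ in non-cyclotomic directions, which go beyond \cite{Sp} and \cite{Felix}; you state this yourself, which means the central step of the argument is missing rather than proved.

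One remark on the part of your split-case strategy that \emph{would} work, so that you see where the genuine obstruction lies: granting the multi-directional power identity, the descent to the unadorned equality does not really need Lemma \ref{changeofdir}\eqref{change2} in the way you invoke it (that lemma requires equality of $\LI$-invariants, not of their powers). Instead, use that $\lambda\mapsto\LI_\lambda(q)$ is linear in $\lambda$ (via the automorphic periods of Lemma \ref{autperiod}); an identity of $g$-th powers of two linear forms on the space of continuous homomorphisms forces the forms to agree up to a fixed root of unity, and evaluating at $\lambda=\ord_{\tilde{\p}}$, where both $\LI$-invariants equal $1$ by definition, pins that root of unity to $1$. So the sign/root-of-unity ambiguity is not the hard part; the hard part is exactly the production of the Artin-formalism identity in directions other than the cyclotomic one (or, alternatively, a direct comparison of the periods $q(\pi,\tilde{\p})$ and $q(\pi_E,\q)$), and this is precisely why the paper leaves the statement as a conjecture.
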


\subsection{Artin formalism - the anticyclotomic case}\label{Anti}
From now we assume that $F$ is totally real and that $E$ is a totally imaginary quadratic extension with Galois group generated by $\sigma$.
We fix a prime $\tilde{\p}\in S_p(\pi)$, which we assume to be split in $E$.
The places of $E$ above $\tilde{\p}$ will be denoted by $\q_1$ and $\q_2$.
We choose an element $u\in (\mathcal{O}_E[\q_1^{-1}])^{\ast}-\mathcal{O}_E^{\ast}$, where $\mathcal{O}_E[\q_1^{-1}]$ denotes the ring of $\q_1$-integers.
\begin{Lem}\label{anticomparison}
Assume that
\begin{itemize}
\item $L(\pi_E,1/2)\neq 0$ and
\item there exists a Jacquet-Langlands lift $\pi_B$ of $\pi$ to the unit group of a totally definite quaternion algebra $B/F$ that is split at $\tilde{\p}$ and in which $E$ can be embedded. 
\end{itemize}
Then we have the following equality:
\begin{align*}
\LI_{u/\sigma(u)}(\pi_B,\tilde{\p})^2
=\LI_{u/\sigma(u)}(\pi_E,\q_1)\cdot \LI_{\sigma(u)/u}(\pi_E,\q_2)
=  \LI_{u/\sigma(u)}(\pi_E,\q_1)^2
\end{align*}
\end{Lem}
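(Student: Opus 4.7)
The plan is to establish the two equalities separately. The second equality is a direct consequence of Galois invariance (Lemma \ref{gallemma}). The nontrivial element $\sigma \in \G$ swaps $\q_1$ and $\q_2$. Taking $\q = \q_1$ and $\lambda = \log_{u/\sigma(u)}$, one computes $\lambda^\sigma(x) = \log_{u/\sigma(u)}(\sigma(x))$, which vanishes exactly when $\sigma(x) = u/\sigma(u)$, i.e.\ when $x = \sigma(u)/u$. Hence $\lambda^\sigma = \log_{\sigma(u)/u}$, and since $E$ is totally imaginary the sign character plays no role, so Lemma \ref{gallemma} yields
\begin{align*}
\LI_{u/\sigma(u)}(\pi_E, \q_1) \;=\; \LI_{\sigma(u)/u}(\pi_E, \q_2),
\end{align*}
from which the second equality follows.

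For the first equality, the strategy is to compute the leading term of an anticyclotomic $p$-adic $L$-function for $\pi_E$ in two independent ways. On the base-change side, I would restrict Deppe's multi-variable $p$-adic $L$-function $L_p(\pi_E, -)$ to the one-parameter family $\chi_s$ of anticyclotomic idele class characters of $E$ whose local component at $\q_1$ is $\exp(s \log_{u/\sigma(u)})$ (the component at $\q_2$ is then determined by the anticyclotomic condition). Because $\pi_{E,\q_1}$ and $\pi_{E,\q_2}$ are both Steinberg, the two modified Euler factors at $\q_1$ and $\q_2$ vanish at $s = 0$, forcing the resulting $L_p^{\anti}(\pi_E, s)$ to have a zero of order at least two. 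Bergunde's generalization of Spie{\ss}' exceptional zero formula (\cite{Felix}, Theorem 4.17) would then express its leading term at $s=0$ as
\begin{align*}
\LI_{u/\sigma(u)}(\pi_E, \q_1) \cdot \LI_{\sigma(u)/u}(\pi_E, \q_2) \cdot C \cdot L(\pi_E, 1/2),
\end{align*}
with $C$ an explicit, nonzero constant built from the Euler factors and local normalizations at places outside $\{\q_1, \q_2\}$.

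On the quaternionic side, the Jacquet-Langlands lift $\pi_B$ together with a fixed embedding $E \hookrightarrow B$ produces a quaternionic Stickelberger element in the Iwasawa algebra of the anticyclotomic tower of $E$, whose specializations interpolate the same anticyclotomic twists of $L(\pi_E, 1/2)$ via Waldspurger's formula. The leading-term formula of \cite{BG2}, Theorem 4.5 would express the leading value along the same anticyclotomic line as $\LI_{u/\sigma(u)}(\pi_B, \tilde{\p})^2 \cdot C' \cdot L(\pi_E, 1/2)$; the square reflects that the unique place $\tilde{\p}$ of $F$ at which $\pi_B$ is Steinberg splits into the two places $\q_1, \q_2$ of $E$, so that the single local $\LI$-invariant of $\pi_B$ is picked up twice when one differentiates along the anticyclotomic line. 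Equating the two leading-term expressions and cancelling the nonzero factor $L(\pi_E, 1/2)$ yields the first equality, with any residual sign discrepancy between $C$ and $C'$ swallowed by the squares.

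The main obstacle is precisely the comparison of these two anticyclotomic $p$-adic $L$-functions: one must verify that, after matching archimedean periods, Waldspurger test vectors, and the local Euler factors at places outside $\{\q_1, \q_2\}$, the normalizing constants $C$ and $C'$ coincide up to a sign that is killed by squaring. This matching is the technical core of the argument -- essentially the content of combining \cite{BG2}, Theorem 4.5 with \cite{Felix}, Theorem 4.17 -- and once it is in hand the lemma follows by the nonvanishing hypothesis on $L(\pi_E, 1/2)$.
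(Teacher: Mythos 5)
Your outline does track the paper's strategy: the second equality is indeed immediate from Lemma \ref{gallemma} (your computation of $\lambda^{\sigma}=\log_{\sigma(u)/u}$ is the right one), and for the first equality the paper likewise compares leading terms of an anticyclotomic $p$-adic $L$-function in the $u/\sigma(u)$-direction, computed once on the $\GL_2/E$-side via \cite{Felix}, Theorem 4.17, and once on the quaternionic side via \cite{BG2}, Theorem 4.5, and then cancels $L(\pi_E,1/2)\neq 0$. But two steps of your version do not go through as stated. First, you take the anticyclotomic object to be the restriction of Deppe's multi-variable $p$-adic $L$-function. The paper deliberately avoids this: it constructs $L^{\anti}_{\tilde{\p}}(\pi_E,s)$ directly from Stickelberger elements for $\pi_E$ and the split extension $E\times E$ (\cite{Felix}, Definition 4.2), precisely because Deppe's function carries modified Euler factors at \emph{all} primes of $E$ above $p$. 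Lemma \ref{anticomparison}, unlike Lemma \ref{comparison}, does not assume $S_p(\pi)=\{\tilde{\p}\}$; if $\pi_{E,\q_0}$ is Steinberg at some $\q_0\mid p$ not above $\tilde{\p}$, then the factor $e(\pi_{E,\q_0},\chi_{\q_0})$ vanishes at the trivial character, the order of vanishing of your restricted function at $s=0$ jumps above two, and the degree-two leading coefficient you want to compare is zero — the comparison degenerates to $0=0$. Even when no such prime exists, these extra Euler factors sit inside your constant $C$ and have no counterpart on the quaternionic side, so the "matching of $C$ and $C'$" you defer is not merely technical bookkeeping; the paper sidesteps it by using the same unmodified Stickelberger framework on both sides, where comparing interpolation properties gives the exact identity $L^{\anti}_{\tilde{\p}}(\pi_E,s)=L^{\anti}_{\tilde{\p}}(\pi_B,s)\,L^{\anti}_{\tilde{\p}}(\pi_B,-s)$.

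Second, your mechanism for the square $\LI_{u/\sigma(u)}(\pi_B,\tilde{\p})^{2}$ is not correct. By \cite{BG2}, Theorem 4.5, the Stickelberger element attached to $\pi_B$ and $E/F$ vanishes to order equal to the number of exceptional primes of $F$ — here one — and its leading term contains a \emph{single} factor $\LI_{u/\sigma(u)}(\pi_B,\tilde{\p})$; "the $\LI$-invariant is picked up twice when one differentiates because $\tilde{\p}$ splits into $\q_1,\q_2$" does not happen for this single object. Moreover, its specializations interpolate toric periods, not the $L$-values $L(\pi_E\otimes\chi,1/2)$ themselves; Waldspurger enters precisely in saying that the \emph{squares} of these periods give the $L$-values. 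So the object matching the $\GL_2/E$-side is the product $L^{\anti}_{\tilde{\p}}(\pi_B,s)L^{\anti}_{\tilde{\p}}(\pi_B,-s)$, each factor contributing one simple exceptional zero and one copy of the $\LI$-invariant — that is where the square comes from, and also why the lemma only yields an equality of squares. Your write-up, which has the quaternionic element interpolating the $L$-values directly while also producing a double zero with a squared $\LI$-invariant, is internally inconsistent; with these two corrections the argument becomes the paper's proof.
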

\begin{proof}
The second equality follows directly from Lemma \ref{gallemma}.
The proof of the first equality is similar to that of Lemma \ref{comparison}.
We give only a quick sketch of it.
Let $G(\tilde{\p})$ be Galois group of the maximal anticyclotomic extension of $E$ which is unramified outside $\q_1$ and $\q_2$.
Up to torsion $G(\tilde{\p})$ is isomorphic to
$$(\mathcal{O}^{\ast}_{E,\q_1}\times \mathcal{O}^{\ast}_{E,\q_2})/\mathcal{O}^{\ast}_{F,\tilde{\p}}
=(\mathcal{O}^{\ast}_{F,\tilde{\p}}\times \mathcal{O}^{\ast}_{F,\tilde{\p}})/\mathcal{O}^{\ast}_{F,\tilde{\p}}
\cong \mathcal{O}^{\ast}_{F,\tilde{\p}}.$$
Hence, by taking the $p$-adic logarithm we get up to torsion an isomorphism between $G(\tilde{\p})$ and $\mathcal{O}_{F,\tilde{\p}}$.

Let $L_{\tilde{\p}}^{\anti}(\pi_E,s)$ be the $p$-adic $L$-function, which interpolates the special values $L(\pi_E\otimes\chi,1/2)$ for characters $\chi\colon G(\tilde{\p})\to \C^{\ast}$.
It can be constructed by taking a projective limit of Stickelberger elements associated to $\pi_E$ and the split extension $E\times E$ (see \cite{Felix}, Definition 4.2).
Note that this is not quite a restriction of Deppe's $p$-adic $L$-function since we do not modify the Euler factors at other primes above $p$.

Comparing interpolation properties we get the equality
$$L_{\tilde{\p}}^{\anti}(\pi_E,s)=L_{\tilde{\p}}^{\anti}(\pi_B,s)L_{\tilde{\p}}^{\anti}(\pi_B,-s),$$
where $L_{\tilde{\p}}^{\anti}(\pi_B,s)$ is the $p$-adic $L$-function constructed via Stickelberger elements associated to $\pi_B$ and the non-split extension $E/F$ (see \cite{BG2}, Definition 3.2, or \cite{Felix}, Definition 4.2).
Using the main result of \cite{Felix} about leading terms of Stickelberger elements for both sides of the equation we get the desired identity.
\end{proof}

\begin{Rem}
The results of \cite{BG2} and \cite{Felix} are valid for arbitrary quaternion algebras.
But, in general it is harder to give an explicit description of $G(\tilde{\p})$ if the quadratic extension is not totally imaginary.
Still, under suitable assumptions we expect similar results to the one above.
For the sake of simplicity we chose to work only with totally imaginary extensions.
\end{Rem}

\section{Conclusions}
Until the end of the article we assume that $F$ is a totally real number field.
\begin{Thm}[Invariance under Jacquet-Langlands lifts]\label{haupt}
Assume that Hypothesis (SNV) holds.
Let $\tilde{\p}\in S_p(\pi)$ be a prime.
If
\begin{itemize}
\item $\pi$ is $p$-ordinary,
\item $[F_{\tilde{\p}}:\Q_p]=1$ or $p$ does not split in $F$ and
\item there exists a Jacquet-Langlands lift $\pi_B$ of $\pi$ to the unit group of a totally definite quaternion algebra $B/F$, which is split at $\tilde{\p}$,
\end{itemize}
we have
$$\LI^{\cyc}(\pi,\tilde{\p})=\pm\LI^{\cyc}(\pi_B,\tilde{\p}).$$
\end{Thm}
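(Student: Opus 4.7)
The strategy is to compare the automorphic periods $q_\pi := q(\pi,\tilde\p)$ and $q_B := q(\pi_B,\tilde\p)$ from Lemma \ref{autperiod}; both lie in $F_{\tilde\p}^\ast\otimes\mathcal{R}_\pi$ since $B$ is split at $\tilde\p$. By Lemma \ref{changeofdir}\eqref{change2} it is enough to exhibit a single direction $x\in F_{\tilde\p}^\ast-\mathcal{O}_{\tilde\p}^\ast$ with $\LI_x(q_\pi)=\LI_x(q_B)$: one then gets $q_\pi^n=q_B^m$ for non-zero $n,m\in\mathcal{R}_\pi$, hence $\LI^{\cyc}(q_\pi)=\LI^{\cyc}(q_B)$. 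The ``$\pm$'' in the theorem will come from the fact that the base-change identity we obtain below is only squared.

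I would work through a family of totally imaginary quadratic extensions $E_i/F$ ($i\in I$) chosen so that $\tilde\p$ splits as $\q_{1,i}\q_{2,i}$ in $E_i$ and so that $E_i$ embeds in $B$; the latter is possible because $B$ is totally definite and split at $\tilde\p$. For each $i$, Hypothesis (SNV) yields a locally constant character $\chi_i$ of $\A_F^\ast/F^\ast$ trivial at each place above $p$ and with $L(\pi_{E_i}\otimes\chi_i,1/2)\neq 0$, and after a further twist (ramified outside $\tilde\p$) one may also assume $S_p(\pi\otimes\chi_i)=\{\tilde\p\}$, so that both Corollary \ref{bcc} and Lemma \ref{anticomparison} apply; by Proposition \ref{twist} and Theorem \ref{invariance} the $\LI$-invariants are unaffected by these twists. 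Setting $q_{E_i}:=q(\pi_{E_i},\q_{1,i})$, choosing $u_i\in(\q_{1,i}^{-1}\mathcal{O}_{E_i})^\ast-\mathcal{O}_{E_i}^\ast$, and putting $x_i:=u_i/\sigma_i(u_i)\in F_{\tilde\p}^\ast-\mathcal{O}_{\tilde\p}^\ast$, Corollary \ref{bcc} (with $f_{\tilde\p}=1$, $g_{\tilde\p}=2$) together with Lemma \ref{anticomparison} give
\begin{equation*}
\LI^{\cyc}(q_\pi)^2=\LI^{\cyc}(q_{E_i})^2\qquad\text{and}\qquad\LI_{x_i}(q_B)^2=\LI_{x_i}(q_{E_i})^2.
\end{equation*}
Taking square roots in the second identity leaves, for each $i$, the dichotomy (A) $\LI_{x_i}(q_{E_i})=\LI_{x_i}(q_B)$ or (B) $\LI_{x_i}(q_{E_i})=-\LI_{x_i}(q_B)$; case (A) for any single $i$ finishes the proof as explained above.

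To rule out that (B) holds for every $i$, I would apply Lemma \ref{ugly} with $q^B=q_B$, $K=\Q_p$, $q=q_{E_i}$, $q'=q_{E_j}$, $x=x_i$ and $x'=x_j$. The third bullet of \ref{ugly} is exactly the first squared identity above (up to the constant factor $f_{\tilde\p}$ relating $\LI^{\cyc}$ to $\LI_p\circ\No_{F_{\tilde\p}/\Q_p}$), so the lemma furnishes a fixed quadratic $f\in\Q_p[T]$ such that $f(\LI_p(\No_{F_{\tilde\p}/\Q_p}(x_i)))$ is the same constant $c$ for every $i$ in case (B). Since $f(T)-c$ has at most two roots in $\Q_p$, at most two distinct values $\LI_p(\No_{F_{\tilde\p}/\Q_p}(x_i))$ can arise, and exhibiting three extensions $E_i$ for which these three values differ forces case (A) for some $i$, completing the argument.

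The main obstacle is precisely this last step: producing three extensions $E_i$ with three distinct values of $\LI_p(\No_{F_{\tilde\p}/\Q_p}(u_i/\sigma_i(u_i)))$, while simultaneously (i) keeping $\tilde\p$ split, (ii) imposing the ramification conditions needed for the embedding $E_i\hookrightarrow B$ and for $S_p(\pi\otimes\chi_i)=\{\tilde\p\}$, and (iii) preserving the applicability of Hypothesis (SNV). A Chebotarev density argument provides infinitely many candidate $E_i$ with the required global splitting behaviour, and within each $E_i$ one has further freedom by multiplying $u_i$ by suitable $\mathcal{O}_{E_i}$-units; the dichotomy in the theorem's hypothesis --- $[F_{\tilde\p}:\Q_p]=1$ or $p$ non-split in $F$ --- is what keeps the arithmetic of $\No_{F_{\tilde\p}/\Q_p}$ rigid enough to guarantee that three such inequivalent directions can actually be exhibited rather than collapsing onto the finite root set of $f(T)-c$.
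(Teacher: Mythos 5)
Your proposal reproduces the architecture of the paper's proof: use (SNV) to twist so that $L(\pi_E,1/2)\neq 0$ and $S_p(\pi)=\{\tilde{\p}\}$, base change over totally imaginary quadratic extensions $E$ split at $\tilde{\p}$, combine the squared cyclotomic identity from Corollary \ref{bcc} with the sign-ambiguous anticyclotomic identity from Lemma \ref{anticomparison}, invoke Lemma \ref{ugly} with $K=\Q_p$ to show that a persistently negative sign would force the values $\LI_p(\No_{F_{\tilde{\p}}/\Q_p}(u/\sigma(u)))$ into the at most two roots of $f(T)-c$, and conclude via Lemma \ref{changeofdir}\eqref{change2}. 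However, the step you yourself label ``the main obstacle'' --- exhibiting enough extensions with distinct values of $\LI_p(\No_{F_{\tilde{\p}}/\Q_p}(u_i/\sigma_i(u_i)))$ --- is precisely where the second hypothesis of the theorem does its work, and you leave it unproved; moreover the two devices you suggest for closing it do not function. Multiplying $u_i$ by units of $\OO_{E_i}$ gives no freedom at all: $E_i/F$ is a CM extension with $\sigma_i$ acting as complex conjugation, so for any unit $w$ the quotient $w/\sigma_i(w)$ has absolute value one at every archimedean place and is therefore a root of unity by Kronecker's theorem, whence $\LI_p$ of the norm of $x_i$ is unchanged. And a bare Chebotarev supply of split-at-$\tilde{\p}$ extensions gives you no control whatsoever over $\LI_p$ of the \emph{norms} of the resulting elements, which is what must avoid the finite root set.

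The missing idea is the paper's explicit mechanism. If $[F_{\tilde{\p}}:\Q_p]=1$ the norm is the identity, and one argues that the values $\LI_p(u/\sigma(u))$ cannot take only finitely many values as $E$ runs over infinitely many admissible extensions (by Lemma \ref{changeofdir}\eqref{change2}, equal $\LI$-invariants force relations of the form $x^{n}=\tilde{x}^{m}$ in $F_{\tilde{\p}}^{\ast}\otimes R$, which pin the second field down to finitely many possibilities for each fixed first one). If instead $p$ does not split in $F$, one takes $E=FK$ with $K/\Q$ imaginary quadratic in which $p$ splits, and chooses $u\in K^{\ast}$ supported at a single prime of $K$ above $p$: because $\tilde{\p}$ is then the unique prime of $F$ above $p$, this $u$ is a unit outside the single prime $\q_1$ of $E$ above it, as Section \ref{Anti} requires, and since $u/\sigma(u)\in\Q_p^{\ast}$ one has $\LI_p(\No_{F_{\tilde{\p}}/\Q_p}(u/\sigma(u)))=[F_{\tilde{\p}}:\Q_p]\cdot\LI_p(u/\sigma(u))$, reducing to the previous situation with $K$ varying. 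Your appeal to the hypothesis as keeping the norm ``rigid enough'' gestures at this but supplies neither the construction nor the reason the non-splitness of $p$ in $F$ is needed (it guarantees the support condition on $u$). A further small point: for Lemma \ref{anticomparison} you need $E$ to embed in the given $B$, which requires $E$ to be non-split at every finite ramified place of $B$; total imaginarity only handles the archimedean places, so this is an additional constraint on the choice of $E$ (and of $K$) that your Chebotarev argument must accommodate.
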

\begin{proof}
Let $E$ be a totally imaginary quadratic extension of $F$, which is split at $\tilde{\p}$.
We may use Hypothesis (SNV) to assume that $L(\pi_E,1/2)\neq 0$, $S_p(\pi)=\tilde{\p}$ and all primes in $S_p(\pi_E)$ lie above $\tilde{\p}$.
Let $\q$ be one of those primes.
By Corollary \ref{bcc} and our assumptions we have
\begin{align}\label{almost}
\LI_p(\No_{F_{\tilde{\p}}/\Q_p}(q(\pi_E,\q)))^{2}=\LI_p(\No_{F_{\tilde{\p}}/\Q_\p}(q(\pi,\tilde{\p})))^{2}.
\end{align} 
By Lemma \ref{anticomparison} we have $\LI_{u/\sigma(u)}(q(\pi_E,\q))=\pm \LI_{u/\sigma(u)}(q(\pi_B,\tilde{\p}))$, where $u\in E^{\ast}$ is the element defined at the beginning of section \ref{Anti} and $\sigma$ is the generator of the Galois group of $E/F$.

If we assume that the sign is negative for every such extension $E$, then by Lemma \ref{ugly} the $p$-adic numbers $\LI_p(\No_{F_{\tilde{\p}}/\Q_p}(u/\sigma(u)))$ would all fulfil a certain quadratic equation.
Hence, there would be only finitely many possible values for these numbers.
In case, $F_{\tilde{\p}}=\Q_p$ this is obviously absurd since there are infinitely many extensions $E/F$.
In general, we cannot control the kernel of the local norm map $\No_{F_{\tilde{\p}}/\Q_p}$.
But, if $\p$ does not split in $F$, we may consider extensions of the form $E=FK$ where $K/\Q$ is an imaginary quadratic extension in which $p$ is split.
Then, the element $u\in E^{\ast}$ can be chosen to lie in $K^{\ast}$.
Hence, we have
$$\LI_p(\No_{F_{\tilde{\p}}/\Q_p}(u/\sigma(u)))=[F_{\tilde{\p}}:\Q_p]\LI_p(u/\sigma(u))$$
and we can argue as before.

In both cases we conclude that there exists an extension $E/F$ such that
$$\LI_{u/\sigma(u)}(q(\pi_E,\q))=\LI_{u/\sigma(u)}(q(\pi_B,\tilde{p})).$$
Thus, we also have $\LI_{p}(q(\pi_E,\q))=\LI_{p}(q(\pi_B,\tilde{\p}))$ by Lemma \ref{changeofdir} \eqref{change2}.
This together with equation \eqref{almost} proves the claim.
\end{proof}

\begin{Rem}
In the case $F=\Q$ Bertolini, Darmon and Iovita show the actual equality of the above $\LI$-invariants (cf.~\cite{BDI}).
They use methods introduced by Greenberg and Stevens in \cite{GS}.
More precisely, they relate both $\LI$-invariants to the logarithmic derivative of the $U_p$-eigenvalue in a Hida family passing through the given modular form.
\end{Rem}

\begin{Cor}\label{main}
Assume that hypothesis (SNV) holds.
Let $A/F$ be a $p$-ordinary modular elliptic curve, which has split multiplicative reduction at a prime $\tilde{\p}\in S_p$.
Let $q_{A,\tilde{\p}}$ be the Tate period of $A$ at $\tilde{\p}$ and $\pi$ the automorphic representation of $\PGL_2/F$ associated to $A$.
Suppose that either $[F_{\tilde{\p}}:\Q_p]=1$ holds or that $p$ does not split in $F$.
Then, the following equality holds:
$$\LI_p(\No_{F_{\tilde{\p}}/\Q_p}(q_{A,\tilde{\p}}))^{2}=\LI^{\cyc}(\pi,\tilde{\p})^{2}$$
\end{Cor}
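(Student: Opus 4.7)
The plan is to combine Theorem \ref{haupt} with the Cerednik-Drinfeld uniformization: the former compares $\LI^{\cyc}(\pi,\tilde{\p})$ with the automorphic $\LI$-invariant of a totally definite quaternionic Jacquet-Langlands lift of $\pi$ up to sign, and the latter identifies the quaternionic invariant with the arithmetic $\LI$-invariant read off from the Tate period $q_{A,\tilde{\p}}$. Squaring then absorbs the sign ambiguity.

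First I would verify the hypotheses of Theorem \ref{haupt}. Since $A$ has split multiplicative reduction at $\tilde{\p}$, the local component $\pi_{\tilde{\p}}$ is the Steinberg representation, so $\tilde{\p}\in S_p(\pi)$, and $p$-ordinariness of $A$ translates verbatim into $p$-ordinariness of $\pi$. What remains is to produce a totally definite quaternion algebra $B/F$, split at $\tilde{\p}$, such that $\pi$ admits a Jacquet-Langlands lift $\pi_B$ to $B^{\ast}$. If $[F:\Q]$ is even one ramifies $B$ exactly at the archimedean places; if $[F:\Q]$ is odd one additionally picks a finite prime $\q\neq \tilde{\p}$ at which $\pi$ is discrete series (for instance, another prime of bad reduction of $A$) and ramifies $B$ there as well. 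Theorem \ref{haupt} then yields $\LI^{\cyc}(\pi,\tilde{\p})=\pm\LI^{\cyc}(\pi_B,\tilde{\p})$, and squaring gives
\[
\LI^{\cyc}(\pi,\tilde{\p})^{2}=\LI^{\cyc}(\pi_B,\tilde{\p})^{2}.
\]

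Second, I would invoke \cite[Theorem 4.10]{BG2}: the Cerednik-Drinfeld rigid analytic uniformization of the Shimura curve attached to an indefinite quaternion algebra obtained from $B$ by switching invariants at one archimedean place and at $\tilde{\p}$ identifies the automorphic period $q(\pi_B,\tilde{\p})\in F_{\tilde{\p}}^{\ast}\otimes \mathcal{R}_\pi$ of Lemma \ref{autperiod} with the Tate period $q_{A,\tilde{\p}}\in F_{\tilde{\p}}^{\ast}$ up to a nonzero rational power. By Lemma \ref{changeofdir}\eqref{change2} this forces
\[
\LI^{\cyc}(\pi_B,\tilde{\p})=\LI_p(q_{A,\tilde{\p}}),
\]
where the assumption $[F_{\tilde{\p}}:\Q_p]=1$ or $p$ non-split in $F$ is what makes the trace normalization implicit in $\LI^{\cyc}$ and the plain $\log_p$ normalization in $\LI_p$ compatible. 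Substituting into the previous display yields the claim.

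The main obstacle is the invocation of \cite[Theorem 4.10]{BG2}, whose proof encodes substantial $p$-adic uniformization theory for Shimura curves and is used here as a black box. A secondary technical point is the existence of $\pi_B$, which in the corner case of $[F:\Q]$ odd with $A$ having bad reduction only at $\tilde{\p}$ requires slight additional input and is best viewed as an implicit hypothesis of the corollary.
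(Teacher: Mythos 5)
Your overall skeleton (Theorem \ref{haupt} to compare $\LI^{\cyc}(\pi,\tilde{\p})$ with the quaternionic invariant up to sign, then \cite{BG2}, Theorem 4.10, to identify $q(\pi_B,\tilde{\p})$ with the Tate period, and squaring to kill the sign) is the same as the paper's, but there is a genuine gap in how you secure the hypothesis of Theorem \ref{haupt} that a Jacquet--Langlands lift $\pi_B$ to a totally definite quaternion algebra split at $\tilde{\p}$ exists. Corollary \ref{main} does \emph{not} assume the existence of such a lift over $F$ --- that assumption appears in Theorem \ref{haupt} but is deliberately absent from the corollary, and removing it is precisely the point of the corollary's proof. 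Your construction of $B$ fails in general: when $[F:\Q]$ is odd you propose to ramify $B$ at ``another prime of bad reduction of $A$'', but (i) such a prime need not exist (e.g.\ $F=\Q$ and $A$ of prime conductor $p$, so that $\pi$ is unramified principal series at every finite place other than $\tilde{\p}$, and no totally definite $B$ split at $\tilde{\p}$ carries a transfer of $\pi$), and (ii) even when $A$ has additive reduction at some other prime, the local component there can be a ramified principal series rather than a discrete series, so $B$ cannot be ramified there either. Declaring the existence of $\pi_B$ an ``implicit hypothesis'' changes the statement; you would then be proving a strictly weaker result than Corollary \ref{main}.

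The paper's way around this is the step you are missing: using Hypothesis (SNV) and the base change machinery (twisting as in Proposition \ref{twist} together with Corollary \ref{bcc}), one first replaces $F$ by a totally real \emph{quadratic} extension $F'$ in which $\tilde{\p}$ splits; since $[F':\Q]$ is even, a totally definite quaternion algebra over $F'$ ramified exactly at the archimedean places exists and the base change $\pi_{F'}$ (still cuspidal, being Steinberg above $\tilde{\p}$) admits a Jacquet--Langlands lift split above $\tilde{\p}$. Because $\tilde{\p}$ splits, the completion and hence the Tate period are unchanged, and Corollary \ref{bcc} gives $\LI^{\cyc}(\pi,\tilde{\p})^{2}=\LI^{\cyc}(\pi_{F'},\q)^{2}$, after which your second step (Theorem \ref{haupt} plus \cite{BG2}, Theorem 4.10, over $F'$) applies verbatim. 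A smaller inaccuracy: the hypothesis ``$[F_{\tilde{\p}}:\Q_p]=1$ or $p$ non-split in $F$'' is not primarily a bookkeeping device reconciling the $\log_p\circ\No_{F_{\tilde{\p}}/\Q_p}$ and $\log_p$ normalizations; it is needed to invoke Theorem \ref{haupt} itself, whose proof uses it to run the infinitude-of-CM-extensions argument against Lemma \ref{ugly}.
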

\begin{proof}
Using similar twisting arguments as before we may replace $F$ by a totally real quadratic extension.
Therefore, there exists a Jacquet-Langlands lift of $\pi$ to a totally definite quaternion algebra and we get $\LI^{\cyc}(\pi,\tilde{\p})^{2}=\LI^{\cyc}(\pi_B,\tilde{\p})^{2}.$
By Theorem 4.10 of \cite{BG2} the automorphic period $q(\pi_B,\tilde{\p})$ is equal to the Tate period of $A$.
\end{proof}

Combining the above comparison of $\LI$-invariants with \cite{Sp}, Theorem 5.7, yields the following partial answer to Hida's exceptional zero conjecture (cf.~\cite{Hida}).
\begin{Cor}[Exceptional zero conjecture]\label{ezc}
Let $A/F$ be a $p$-ordinary modular elliptic curve and $L_p(A,s)$ its associated $p$-adic $L$-function.
Write $S_p(A)\subseteq S_p$ for the subset of primes, at which $A$ has split multiplicative reduction and $r=|S_p|$ for its cardinality.
Assume that Hypothesis (SNV) holds and that $F_\p=\Q_p$ for all primes $\p\in S_p(A)$.
Then we have
$$L_p^{(r)}(A,0)=\pm r! \prod_{\p\in S_p(A)}\hspace{-0.8em} \LI_p(q_{A,\p})\cdot \prod_{\p\in S_p - S_p(A)}\hspace{-1.3em}e_\p(A)\cdot L(A,1),$$
where $q_{A,\p}$ is the Tate period of $A$ at $\p\in S_p(A)$ and $e_\p(A)=e(\pi_\p,1)$ is the modified Euler factor at $\p \in S_p - S_p(A).$
\end{Cor}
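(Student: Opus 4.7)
The plan is to apply Spie\ss' exceptional zero formula to express $L_p^{(r)}(A,0)$ in terms of automorphic $\LI$-invariants, and then to convert each automorphic $\LI$-invariant into an $\LI$-invariant of a Tate period by invoking Corollary \ref{main} prime by prime.

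First I would let $\pi$ denote the automorphic representation of $\PGL_{2}/F$ attached to $A$. The assumption that $A$ has split multiplicative reduction at exactly the primes in $S_p(A)$ translates to $S_p(\pi)=S_p(A)$, and the $p$-ordinarity of $A$ translates to the $p$-ordinarity of $\pi$. Theorem 5.7 of \cite{Sp}, together with its extension to totally real base fields (\cite{Felix}, Theorem 4.17), then yields
\begin{equation*}
\frac{1}{r!}L_p^{(r)}(A,0)=\prod_{\p\in S_p(A)}\LI^{\cyc}(\pi,\p)\;\cdot\prod_{\p\in S_p- S_p(A)}\hspace{-1em}e(\pi_\p,1)\cdot L(A,1),
\end{equation*}
once one identifies Deppe's $p$-adic $L$-function with $L_p(A,s)$ and the automorphic value $L(\pi,1/2)$ with $L(A,1)$ in the usual manner. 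This accounts for the factor $r!$, the modified Euler factors $e_\p(A)=e(\pi_\p,1)$ outside $S_p(A)$, and the archimedean $L$-value on the right-hand side of the claimed formula.

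The second step is to replace each $\LI^{\cyc}(\pi,\p)$ by the corresponding $\pm\LI_p(q_{A,\p})$. For every $\p\in S_p(A)$ the hypothesis $F_\p=\Q_p$ forces $[F_\p:\Q_p]=1$, so Corollary \ref{main} applies and gives $\LI^{\cyc}(\pi,\p)^{2}=\LI_p(q_{A,\p})^{2}$, equivalently $\LI^{\cyc}(\pi,\p)=\pm\LI_p(q_{A,\p})$ for a sign that depends on $\p$. Multiplying these equalities across all $\p\in S_p(A)$ the local signs collapse into a single global $\pm 1$, and substituting into the displayed identity above produces exactly the formula stated in the corollary.

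The main obstacle I anticipate is not conceptual but consists of bookkeeping: one must check that Deppe's $p$-adic $L$-function $L_p(\pi,s)$ coincides, up to the usual shift $s\mapsto s-1/2$ and Gauss-sum normalizations, with the classical $p$-adic $L$-function $L_p(A,s)$ of the elliptic curve, so that Spie\ss' leading term really returns $L(A,1)$ times the expected Euler factors. Hypothesis (SNV) is only needed to invoke Corollary \ref{main}, and it is already assumed in the statement, so no additional non-vanishing input is required beyond what is already in play.
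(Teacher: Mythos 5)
Your proposal is correct and is essentially the paper's own (implicit) argument: the corollary is obtained precisely by combining Spie\ss' exceptional zero formula (\cite{Sp}, Theorem 5.7) with Corollary \ref{main}, whose hypothesis $[F_\p:\Q_p]=1$ is supplied at each $\p\in S_p(A)$ by the assumption $F_\p=\Q_p$, the per-prime signs merging into the single global $\pm$. The only nitpick is that the relevant vanishing order (and hence the $r$ in the formula) is $|S_p(A)|$, the number of split multiplicative primes, and that Spie\ss' theorem already covers totally real fields, with Bergunde's \cite{Felix} needed only for general number fields.
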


Generalizations of Hida's conjecture to arbitrary number fields have been proposed by Deppe (cf.~\cite{De}, Conjecture 4.15) and Disegni (cf.~\cite{Disegni}).
Similarly as in Corollary \ref{ezc}, one can use the following theorem together with Bergunde's generalization of Spie\ss' theorem to arbitrary number fields (see \cite{Felix}, Theorem 4.17) to give partial answers to these conjectures.
\begin{Thm}[CM extensions]\label{CM}
Assume that Hypothesis (SNV) holds.
Let $A/F$ be a $p$-ordinary modular elliptic curve, which has split multiplicative reduction at a prime $\tilde{\p}\in S_p$ with $[F_{\tilde{\p}}:\Q_p]=1$.
Let $q_{A,\tilde{\p}}$ be the Tate period of $A$ at $\tilde{\p}$ and $\pi$ the automorphic representation of $\PGL_2/F$ associated to $A$. Then, the equality
$$\LI_p(q_{A,\tilde{\p}})=\LI^{\cyc}(\pi_E,\q)$$
holds for every totally imaginary quadratic extension $E$, in which $\tilde{\p}$ splits, and every prime $\q$ above $\tilde{\p}$.
\end{Thm}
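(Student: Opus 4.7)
The strategy is to combine the sign-ambiguous cyclotomic comparison available from Corollary~\ref{bcc} and Corollary~\ref{main} with the sign-ambiguous anticyclotomic comparison of Lemma~\ref{anticomparison}, and then to use Lemma~\ref{changeofdir} together with the transcendence of the Tate period to force the sign to be $+1$.

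As in the proof of Theorem~\ref{haupt} (and, if needed, after base-changing to a totally real quadratic extension as in the proof of Corollary~\ref{main}), I would first use Hypothesis~(SNV) and Proposition~\ref{twist} to reduce to a situation where $L(\pi_E,1/2)\neq 0$, $S_p(\pi)=\{\tilde{\p}\}$, both primes $\q_1,\q_2$ of $E$ above $\tilde{\p}$ are Steinberg for $\pi_E$, and there exists a Jacquet-Langlands lift $\pi_B$ of $\pi$ to the unit group of a totally definite quaternion algebra $B/F$ split at $\tilde{\p}$ and into which $E$ embeds; the Tate period $q_{A,\tilde{\p}}$ is insensitive to such twists. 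Since $[F_{\tilde{\p}}:\Q_p]=1$ we have $\LI^{\cyc}=\LI_p$ at $\tilde{\p}$, and since $\tilde{\p}$ is split in $E$ one has $f_{\tilde{\p}}=1$, $g_{\tilde{\p}}=2$ in Corollary~\ref{bcc}. Combining Corollary~\ref{bcc} with Corollary~\ref{main} yields $\LI^{\cyc}(\pi_E,\q)^2=\LI_p(q_{A,\tilde{\p}})^2$, and combining Lemma~\ref{anticomparison} with the identification $q(\pi_B,\tilde{\p})=q_{A,\tilde{\p}}$ from Theorem~4.10 of \cite{BG2} yields $\LI_{u/\sigma(u)}(\pi_E,\q)^2=\LI_{u/\sigma(u)}(q_{A,\tilde{\p}})^2$ for any auxiliary $u\in(\q_1^{-1}\mathcal{O}_E)^{\ast}-\mathcal{O}_E^{\ast}$. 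Hence
$$\LI^{\cyc}(\pi_E,\q)=\epsilon_1\cdot\LI_p(q_{A,\tilde{\p}}),\qquad \LI_{u/\sigma(u)}(\pi_E,\q)=\epsilon_2\cdot\LI_{u/\sigma(u)}(q_{A,\tilde{\p}}),$$
for some $\epsilon_1,\epsilon_2\in\{\pm 1\}$; the signs are well-defined because the transcendence of $q_{A,\tilde{\p}}$ (Barré-Sirieix-Diaz-Gasbarri-Philippon) ensures $\LI_\lambda(q_{A,\tilde{\p}})\neq 0$ for every nonzero continuous $\lambda$.

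Write $q_1=q(\pi_E,\q)$, $q_2=q_{A,\tilde{\p}}$, $a=\ord(q_1)$, $b=\ord(q_2)$. Lemma~\ref{changeofdir}(b), applied with $x=p$ respectively $x=u/\sigma(u)$ (both of nonzero valuation), shows that $\epsilon_1=+1\iff\epsilon_2=+1$, so it suffices to exclude the case $\epsilon_1=\epsilon_2=-1$. In that case both sign relations translate into $\log_p(q_1^b q_2^a)=0$ and $\log_{u/\sigma(u)}(q_1^b q_2^a)=0$, so $q_1^b q_2^a$ lies in the intersection of the kernels of $\log_p$ and $\log_{u/\sigma(u)}$ on $\Q_p^{\ast}\otimes\mathcal{R}_\pi$. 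If I choose $u$ so that $u/\sigma(u)\notin p^{\Z}\mu_{p-1}$ in $\Q_p^{\ast}$ -- possible because the admissible values $u/\sigma(u)$ range over an infinite family of algebraic numbers generically avoiding this thin locus -- then these two kernels share only the torsion subgroup, forcing $\ord(q_1^b q_2^a)=0$, contradicting $\ord(q_1^b q_2^a)=2ab\neq 0$.

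The main technical obstacle is the preliminary twisting step, which must simultaneously arrange the nonvanishing $L(\pi_E,1/2)\neq 0$, the existence of $\pi_B$ with the correct local properties (possibly after a totally real quadratic base change to secure a JL lift), and the Steinberg behavior at the primes above $\tilde{\p}$; Hypothesis~(SNV) is indispensable here. The transcendence of the Tate period underlies the argument both in making the sign comparisons nondegenerate and in the final step, by ruling out the hypothetical algebraic relations between $q_{A,\tilde{\p}}$ and elements of $E^{\ast}$ that would otherwise obstruct the choice of $u$.
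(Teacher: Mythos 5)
Your proposal is correct in substance and follows the same skeleton as the paper's argument: reduce by (SNV)/twisting as in Theorem \ref{haupt} and Corollary \ref{main}, feed in the squared cyclotomic identity coming from Corollaries \ref{main} and \ref{bcc}, the squared anticyclotomic identity coming from Lemma \ref{anticomparison} together with $q(\pi_B,\tilde{\p})=q_{A,\tilde{\p}}$ (\cite{BG2}, Theorem 4.10), and then pin down the sign using Lemma \ref{changeofdir}. The only real divergence is the endgame. The paper assumes the negative anticyclotomic sign, uses Lemma \ref{changeofdir}(a) plus the squared cyclotomic identity to force $\LI_p(u/\sigma(u))=\LI_p(q_{A,\tilde{\p}})$, and contradicts this with the transcendence of the Tate period. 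You instead link the two signs via Lemma \ref{changeofdir}(b) (transcendence entering only to make the signs well defined) and kill the double-negative case by a kernel-of-logarithm/valuation argument, which amounts to $\LI_p(u/\sigma(u))\neq 0$. Both work; your final step is more elementary. In fact, expanding both squared identities via Lemma \ref{changeofdir}(a) and subtracting gives $2\,\LI_p(u/\sigma(u))\bigl(\LI_p(q(\pi_E,\q))-\LI_p(q_{A,\tilde{\p}})\bigr)=0$ directly, so once $\LI_p(u/\sigma(u))\neq 0$ is known neither the sign bookkeeping nor the transcendence of $q_{A,\tilde{\p}}$ is strictly needed at this stage.

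Two points need tightening. First, your justification for choosing $u$ with $u/\sigma(u)\notin p^{\Z}\mu_{p-1}$ ("generically avoiding this thin locus") is not an argument; but the statement holds for \emph{every} admissible $u$: if $u/\sigma(u)=\zeta p^{k}$ with $\zeta$ a root of unity, this is an identity of algebraic numbers, and applying $\No_{E/F}$ (note $\No_{E/F}(u/\sigma(u))=1$ and $\No_{E/F}(\zeta)=\zeta\sigma(\zeta)=1$ since $\sigma$ is complex conjugation on the CM field $E$) yields $p^{2k}=1$, so $k=0$, whence $\ord_{\q_1}(u/\sigma(u))=\ord_{\q_1}(u)=0$, contradicting $u\notin\OO_E^{\ast}$. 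This is exactly the nonvanishing $\LI_p(u/\sigma(u))\neq 0$ that the paper's proof also uses (and likewise asserts without proof), so replace the genericity phrase by this one-line argument. Second, your blanket claim that $\LI_\lambda(q_{A,\tilde{\p}})\neq 0$ for \emph{every} nonzero continuous $\lambda$ is too strong (e.g.\ $\lambda=\log_{q_{A,\tilde{\p}}}$); what is true, and all you use, is that $\LI_{\log_x}(q_{A,\tilde{\p}})\neq 0$ for $x\in\{p,\,u/\sigma(u)\}$, since $\log_x(y)=0$ forces a relation $y^{\ord(x)}=x^{\ord(y)}p^{k}\zeta$, which would make $q_{A,\tilde{\p}}$ algebraic, contradicting Barr\'e-Sirieix--Diaz--Gramain--Philibert (note the correct attribution). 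The preliminary twisting/base-change reduction is treated at the same level of detail as in the paper itself, so no objection there.
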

\begin{proof}
Let $E$ be as in the theorem, $u\in E^{\ast}$ an element as defined at the beginning of Section \ref{Anti} and $\sigma$ the generator of the Galois group of $E/F$.
By similar arguments as in the proof of Theorem \ref{haupt} we know that
$$\LI_{u/\sigma(u)}(q(\pi_E,\q))=\pm \LI_{u/\sigma(u)}(q_{A,\tilde{\p}}).$$
Assume that the sign is negative.
Then, we have the chain of equalities
\begin{align*}
\LI_p(q_{A,\tilde{\p}})^{2}
&\stackrel{\ref{main}}{=}\LI^{\cyc}(\pi,\tilde{\p})^{2}\\
&\stackrel{\ref{bcc}}{=}\LI^{\cyc}(\pi_E,\q)^{2}\\
&\stackrel{}{=}\LI_p(q(\pi_E,\q))^{2}\\
&\stackrel{\ref{changeofdir}}{=}(-\LI_p(q_{A,\tilde{\p}})+2\LI_p(u/\sigma(u))^{2}.
\end{align*}
Since $\LI_p(u/\sigma(u))\neq 0$ this implies that
$$\LI_p(u/\sigma(u))=\LI_p(q_{A,\tilde{\p}}).$$
But this is absurd since $u/\sigma(u)$ is algebraic by definition while by a theorem of Barré-Sirieix, Diaz, Gramain and Philibert (cf.~\cite{LInv}) $q_{A,\tilde{\p}}$ is transcendental.
\end{proof}

\bibliographystyle{abbrv}
\bibliography{bibfile}

\def\cprime{$'$}
\begin{thebibliography}{10}

\bibitem{Akbary}
A.~Akbary.
\newblock Simultaneous non-vanishing of twists.
\newblock {\em Proceedings of the American Mathematical Society},
  134(11):3143--3151, 2006.

\bibitem{AC}
J.~Arthur and L.~Clozel.
\newblock {\em Simple algebras, base change, and the advanced theory of the
  trace formula}, volume 120 of {\em Annals of Mathematics Studies}.
\newblock Princeton University Press, Princeton, NJ, 1989.

\bibitem{LInv}
K.~Barr{\'e}-Sirieix, G.~Diaz, F.~Gramain, and G.~Philibert.
\newblock Une preuve de la conjecture de {M}ahler-{M}anin.
\newblock {\em Inventiones mathematicae}, 124(1):1--9, 1996.

\bibitem{BDJ}
D.~{Barrera}, M.~{Dimitrov}, and A.~{Jorza}.
\newblock {$p$-adic $L$-functions of Hilbert cusp forms and the trivial zero
  conjecture}.
\newblock {\em ArXiv e-prints}, Sept. 2017.

\bibitem{BaWi}
D.~{Barrera} and C.~{Williams}.
\newblock {$\mathcal{L}$-invariants and exceptional zeros of {B}ianchi modular
  forms}.
\newblock {\em Trans. Amer. Math. Soc.}, 372(1):1--34, 2019.

\bibitem{Felix}
F.~Bergunde.
\newblock {\em On leading terms of quaternionic Stickelberger elements over
  number fields}.
\newblock PhD thesis, University Bielefeld, 2017.

\bibitem{BG2}
F.~Bergunde and L.~Gehrmann.
\newblock Leading terms of anticyclotomic {S}tickelberger elements and
  {$p$}-adic periods.
\newblock {\em Trans. Amer. Math. Soc.}, 370(9):6297--6329, 2018.

\bibitem{BDI}
M.~Bertolini, H.~Darmon, and A.~Iovita.
\newblock Families of automorphic forms on definite quaternion algebras and
  {T}eitelbaum’s conjecture.
\newblock {\em Astérisque}, 331:29--64, 2010.

\bibitem{Br}
C.~Breuil.
\newblock Invariant {$L$} et série spéciale p-adique.
\newblock {\em Annales scientifiques de l'École Normale Supérieure},
  37(4):559--610, 2004.

\bibitem{Br2}
C.~Breuil.
\newblock Série spéciale p-adique et cohomologie étale complétée.
\newblock {\em Astérisque}, 331:65--115, 2010.

\bibitem{C}
L.~Clozel.
\newblock {Motifs et formes automorphes: applications du principe de
  fonctorialit\'{e}}.
\newblock In {\em Automorphic forms, {S}himura varieties, and {L}-functions,
  Vol. I (Ann Arbor, MI, 1988)}, volume~10 of {\em Perspect. Math.}, pages
  77--159. Academic Press, Boston, MA, 1990.

\bibitem{D}
H.~Darmon.
\newblock Integration on {$H_{ p}\times H$} and arithmetic applications.
\newblock {\em Ann. of Math. (2)}, 154(3):589--639, 2001.

\bibitem{DG}
S.~Dasgupta and M.~Greenberg.
\newblock L-invariants and {S}himura curves.
\newblock {\em Algebra and Number Theory}, 6(3):455--485, 2012.

\bibitem{De}
H.~Deppe.
\newblock {p-adic L-functions of automorphic forms and exceptional zeros}.
\newblock {\em Documenta Mathematica}, 21:689--734, 2016.

\bibitem{Disegni}
D.~{Disegni}.
\newblock {On the p-adic {B}irch and {S}winnerton-{D}yer conjecture for
  elliptic curves over number fields}.
\newblock {\em Kyoto {J}ournal of Mathematics}, to appear.

\bibitem{GS}
R.~Greenberg and G.~Stevens.
\newblock p-adic {L}-functions and p-adic periods of modular forms.
\newblock {\em Inventiones mathematicae}, 111(1):407--447, 1993.

\bibitem{GMS}
X.~Guitart, M.~Masdeu, and M.~H. Şengün.
\newblock Darmon points on elliptic curves over number fields of arbitrary
  signature.
\newblock {\em Proceedings of the London Mathematical Society},
  111(2):484--518, 2015.

\bibitem{Hida}
H.~Hida.
\newblock {L}-invariants of {T}ate curves.
\newblock {\em Pure and Applied Math Quarterly}, 5:1343--1384, 2009.

\bibitem{Orton}
L.~Orton.
\newblock An elementary proof of a weak exceptional zero conjecture.
\newblock {\em Canadian Journal of Mathematics}, 56:373--405, 2004.

\bibitem{Rohrlich}
D.~E. Rohrlich.
\newblock Nonvanishing of {L}-functions and structure of {M}ordell-{W}eil
  groups.
\newblock {\em Journal für die reine und angewandte Mathematik}, 417:1--26,
  1991.

\bibitem{Sp}
M.~Spieß.
\newblock On special zeros of p-adic {L}-functions of {H}ilbert modular forms.
\newblock {\em Inventiones mathematicae}, 196(1):69--138, 2014.

\bibitem{Teitelbaum}
J.~T. Teitelbaum.
\newblock Values of p-adic {L}-functions and a p-adic {P}oisson kernel.
\newblock {\em Inventiones mathematicae}, 101(2):395--410, 1990.

\end{thebibliography}

\end{document}